\documentclass[12pt,a4paper]{article}
\usepackage{amsmath,amsthm,amsfonts,amssymb,enumitem,mathtools,comment,xcolor}
\usepackage{tikz}
\usetikzlibrary{arrows.meta}
\usepackage[capitalize]{cleveref}
\usepackage{soul}
\usepackage{authblk}
\title{On a Conjecture of Las Vergnas }
\author[1]{Steven D. Noble}
\author[2]{Gordon F. Royle}
\affil[1]{School of Computer Science, University of Leeds, Leeds, West Yorkshire, United Kingdom}
\affil[2]{School of Mathematics and Statistics, University of Western Australia, Crawley, WA, Australia}

\newcommand\blfootnote[1]{%
  \begingroup
  \renewcommand\thefootnote{}\footnote{#1}%
  \addtocounter{footnote}{-1}%
  \endgroup
}

\newcommand{\bG}{\mathbb G}

\DeclareMathOperator{\del}{del}
\DeclareMathOperator{\con}{con}
\newtheorem{theorem}{Theorem}
\newtheorem{lemma}[theorem]{Lemma}
\newtheorem{conjecture}[theorem]{Conjecture}
\newtheorem{proposition}[theorem]{Proposition}
\newtheorem{corollary}[theorem]{Corollary}
\theoremstyle{definition}
\newtheorem{definition}[theorem]{Definition}
\newtheorem{remark}[theorem]{Remark}

\newcommand{\divides}{\mid}
\newcommand{\meddivides}{\mathrel{\big|}}

\newcommand{\deltawye}{$\Delta Y$}
\newcommand{\wyedelta}{$Y\!\Delta$\,}
\newcommand{\dwred}{\deltawye\!-reducible\ }

\renewcommand{\leq}{\leqslant}
\renewcommand{\geq}{\geqslant}

\newcommand{\symdiff}{\mathbin{\triangle}}

\begin{document}
\maketitle

\blfootnote{2020 \textit{Mathematics Subject Classification}. 05C31, 05B35.}

\begin{abstract}
In 1988, Las Vergnas conjectured that if $M$ is a binary matroid with bicycle dimension $d$, then for $0 \leq k \leq d$, the $k$th derivative of the diagonal Tutte polynomial $T(M;z,z)$ evaluated at $z=-1$ is an integer multiple of $2^{d-k}$. While this was rapidly disproved for binary matroids and for graphs in general, extensive computations strongly suggested that it might be true for \emph{planar graphs}. In this paper we prove that this is indeed the case. To do this, we consider a stronger divisibility property that we call the LV property, and a larger class of graphs, namely the class of \deltawye-reducible graphs. By a detailed analysis of how a \deltawye exchange affects the coefficients of the diagonal Tutte polynomial, we show that \deltawye-reducible graphs have the LV property. That Las Vergnas' conjecture holds for planar graphs immediately follows because planar graphs are \deltawye reducible and the LV property is stronger than Las Vergnas' divisibility conditions.
\end{abstract}

\section{Introduction}\label{sec:intro}

This paper is concerned with the following conjecture of Las Vergnas \cite{LV33} about the relationship between certain divisibility properties of the coefficients of the diagonal Tutte polynomial $T(z,z)$ and the bicycle dimension of a graph or binary matroid. (Definitions, notation and terminology are given in the next section.) 




\begin{conjecture}\label{conj3}
If $M$ is a binary matroid with bicycle space of dimension $d$, then for $k=0, 1, \ldots, d$ the $k$th derivative of $T(M;z,z)$ evaluated at $z=-1$ is an integer multiple of $2^{d-k}$.
\end{conjecture}


As it happens, this conjecture is not true for binary matroids (the smallest counterexample has rank $5$ and $15$ elements) and it fails even for graphs where $K_8$ is a counterexample. (See \cite{MR4093686} for further details.)
However, the results of extensive computations exploring this conjecture led us to believe that it may still hold for \emph{planar graphs} or a slightly larger class of graphs.

Although the operations of deletion and contraction are fundamental to proving results about both minor-closed classes of graphs and Tutte polynomials, we were unable to give a direct deletion-contraction proof that planar graphs satisfy Conjecture~\ref{conj3}. The most obvious inductive approach fails precisely when every edge of the graph lies in a bicycle because then deleting or contracting any of the edges reduces the bicycle dimension.

To make progress we consider a \emph{stronger} but arguably more natural property than the divisibility conditions of Conjecture~\ref{conj3}. If we let $D(z) = T(G;z,z)$ denote the diagonal Tutte polynomial then the Taylor series of $D$ around $z=-1$ is given by 
\begin{align*}
D(z) &= D(-1) + D'(-1) (z+1) +  \frac{D''(-1)}{2} (z+1)^2 + \\
& \hspace{2cm} \cdots + \frac{D^{(k)}(-1)}{k!} (z+1)^k + \cdots \\
&= b_0 + b_1 (z+1)  + b_2 (z+1)^2 + \cdots.
\end{align*}

Then $b_0 = \pm 2^d$ and the divisibility conditions of Conjecture~\ref{conj3} are satisfied if and only if
\[2^{d-k} \mid k!\, b_k \  \text{ for all }\  k \leqslant d.
\]

Many graphs satisfy these conditions due to powers of $2$ occurring in the difficult-to-manage $k!$ term rather than directly from the coefficients of the diagonal Tutte polynomial. Thus we define a \emph{more stringent} divisibility property which we call the \emph{LV property}. 

\begin{definition}\label{lvproperty}
With the notation above, we say that a graph $G$ has the \emph{LV property} if $\ 2^{d-k} \mid b_k$ for all $k \leqslant d$.
\end{definition}

Classifying all graphs with the 
LV property is hopeless; for one, this includes all graphs with an odd number of spanning trees. However, although the LV property itself is not closed under taking minors, we might still seek large minor-closed classes of graphs with this property. 

Repeating our computations on small graphs, but now testing for the stronger LV property, reveals a more precise picture. First, all the planar graphs that we tested still satisfy the stronger LV property. Second, the smallest graphs (by number of edges) that \emph{do not} have the LV property have $15$ edges. There are $7$ such graphs ranging from $K_6$ on six vertices to the Petersen graph on $10$ vertices. In fact, these graphs form the \emph{Petersen family}, which is the complete set of graphs that can be obtained from the Petersen graph by a sequence of \deltawye and \wyedelta exchanges. Robertson, Seymour and Thomas \cite{MR1164063} showed that the Petersen family is the complete set of excluded minors for the minor-closed class of \emph{linklessly-embeddable} graphs and so one might hope that linklessly-embeddable graphs have the LV property. Unfortunately this is not the case---the graph depicted in Figure~\ref{fig:rhombic} does not have the LV property, but it is an apex graph and therefore is linklessly embeddable (see \cite{MR1164063}).

The graphs in the Petersen family are also excluded minors for the class of \deltawye-reducible graphs (although not the complete set for there are billions more \cite{MR2200535}), leading us to investigate how a \deltawye exchange affects the coefficients of the diagonal Tutte polynomial. This turns out to require rather detailed analysis but eventually leads to our main result:


\begin{theorem}\label{thm:main}
A \dwred graph has the LV property.
\end{theorem}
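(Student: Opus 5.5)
We argue by induction along a reduction sequence. A \dwred graph reduces to the trivial graph (or a graph with no edges) by series and parallel reductions, deletion of loops and pendant edges, and \deltawye and \wyedelta exchanges. Reversing this sequence, $G$ is built up from a trivial graph by the inverse operations. The trivial graph has $D(z)=1$ and $d=0$, so it has the LV property. It therefore suffices to show that each inverse operation preserves the LV property. Working in the reverse direction is convenient because each step is then a local modification whose effect on $D$ we can write down.

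\textbf{Easy operations.} We treat these via the multiplicativity of $T$ over direct sums and one-point joins, and via deletion--contraction. Adding a loop multiplies $D$ by $z$, and adding a coloop multiplies $D$ by $z$. Substituting $z=-1+w$ gives multiplication by $-1+w$. In both cases the bicycle dimension is unchanged, since a loop or a coloop is not in any bicycle. The coefficients of $(-1+w)B(w)$ are $-b_k+b_{k-1}$, and
\[
2^{d-k}\mid b_k,\qquad 2^{d-k}\mid 2^{d-k+1}\mid b_{k-1},
\]
so the LV property is preserved. For series and parallel extensions we write $D_{G}=D_{G\setminus e}+D_{G/e}$ (no loops or coloops) and track how $d$ changes. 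Adding an edge in parallel either keeps $d$ or changes it by one, according to whether the new element creates a bicycle. To handle this uniformly, I would strengthen the induction hypothesis to carry information about $G\setminus e$ and $G/e$ simultaneously. The cleanest route is to work with the two-variable-at-a-point data $T(G;z,z)$ together with the ``rooted'' polynomials $T(G\setminus e)$ and $T(G/e)$ for a distinguished edge. For a series/parallel class of size $m$, the polynomial is an explicit combination $A\cdot p_m(z)+B\cdot q_m(z)$, where $p_m,q_m$ are explicit (geometric-sum type) polynomials in $z$. We then check the $2$-adic valuations of their Taylor coefficients at $-1$ directly.

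\textbf{\deltawye exchange.} This is the heart of the argument. Let $G$ contain a triangle $\Delta$ on edges $a,b,c$ and let $G'$ be obtained by the \deltawye exchange. Expand $D_G$ over the $8$ deletion/contraction states of $\{a,b,c\}$. The result is a linear combination of the Tutte polynomials of a few minors of $H=G\setminus\{a,b,c\}$ with the three triangle vertices identified in various patterns (all separate, one pair identified, all identified). The coefficients are polynomials in $z$ coming from $T(U_{2,3})$ and its minors. Doing the same for $G'$ gives the same minors with different coefficient polynomials. The difference $D_G-D_{G'}$ is then an explicit combination, essentially $(z^2-z-\ldots)$-type factors times a single ``defect'' polynomial. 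Duality plays a role here: \wyedelta is the dual of \deltawye, and $T(M^*;z,z)=T(M;z,z)$ while $d(M^*)=d(M)$. Hence we only need one direction, and reversal is handled by symmetry.

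\textbf{Main obstacle.} The hard step is controlling the bicycle dimensions. The dimensions of $G$, $G'$ and the auxiliary minors can differ by $0$ or $\pm1$ depending on how the triangle interacts with the bicycle space. So we need a case analysis, according to whether the cocycle/cycle space restricted to $\{a,b,c\}$ has each possible type. In each case we must show that the relevant combination of Taylor coefficients at $w=z+1$ has the required $2$-adic divisibility. I would first compute the Taylor expansions at $z=-1$ of the coefficient polynomials, for example $z+1=w$ and $z^2+z+1=w^2-w+1$. Then I would show that the defect term is always divisible by a sufficiently high power of $2$ times a power of $w$, giving the needed shift. If this closure fails for the plain LV property, I would strengthen the inductive statement to a joint LV-type property for the family of minors at a triangle, so that the induction closes.
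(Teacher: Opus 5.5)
Your outline has the right overall shape, but the step you flag as the main obstacle is left open, and two ingredients you propose for it are wrong, so this is a genuine gap. First, ``each inverse operation preserves the LV property'' is not a true local statement. Whether the larger graph has the LV property depends on auxiliary minors that do not occur in the reduction sequence. In a parallel or series step these are $G\setminus e$ and $G/e$. In a $\Delta Y$ step they are $G_{\del}=G\setminus\{a,b,c\}$, the graph $G_{\con}$ with the three triangle vertices identified, and the three graphs $G_p$, $p\in\{a,b,c\}$, obtained by contracting $p$ and deleting the other two triangle edges. The ``strengthened hypothesis'' you allude to is never specified. What closes the induction is that $\Delta Y$-reducible graphs form a minor-closed class (Truemper) and each has a reduction with weakly decreasing edge count. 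So you should induct on $|E(G)|$; all these minors then have the LV property for free.

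Second, your bicycle bookkeeping is off. A $\Delta Y$ exchange gives $b(G')=b(G)$ exactly, because $b+k$ is invariant under knot moves. On the other hand, $G_{\del}$ and $G_{\con}$ can lose two dimensions, as already happens for $K_4$. The correct bounds come from a bicycle basis in which at most two members meet the triangle: $b(G_{\del}),b(G_{\con})\geq d-2$ and $b(G_p)\geq d-1$. In the parallel step, the case analysis disappears if you compare with deletion of a \emph{pair} of parallel edges, which preserves $b+k$; this is why the paper uses the knot moves (K3) and (K4). Your appeal to duality for the reverse exchange only makes sense after passing to matroids. It is also unnecessary, since the argument can be made symmetric in $G$ and $G'$.

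The decisive missing idea is where the extra power of $2$ in the ``defect'' comes from. Assume $G_{\del}$ is connected, so no triangle edge is ever a bridge; the paper arranges this by reducing to $2$-connected graphs and applying exchanges only to graphs with no vertex of degree two. Writing $T(\cdot)$ for $T(\cdot\,;z,z)$, your eight-state expansion gives
\begin{gather*}
T(G)=T(G_{\del})+\sum_{p} T(G_p)+(z+2)\,T(G_{\con}),\\
T(G')=(z+2)\,T(G_{\del})+\sum_{p} T(G_p)+T(G_{\con}).
\end{gather*}
Hence $b_k(G')-b_k(G)=b_{k-1}(G_{\del})-b_{k-1}(G_{\con})$. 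From the minors alone this is divisible only by $2^{d-k-1}$, one factor short, so the defect is not ``always'' sufficiently divisible.

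The missing factor must come from the hypothesis that $G$ (or $G'$) has the LV property, which your plan never uses in this step. Compare coefficients of $w^j$ in the first identity, noting $z+2=w+1$. Since $2^{d-j}\mid b_j(G)$, and $2^{d-j-1}$ divides both $b_j(G_p)$ and $b_{j-1}(G_{\con})$, you get $2^{d-j-1}\mid b_j(G_{\del})+b_j(G_{\con})$. Because $2^{d-j-1}\mid 2b_j(G_{\con})$, the same power divides $b_j(G_{\del})-b_j(G_{\con})$, which is exactly what is needed. The paper does the equivalent bookkeeping through the counts $c_i,d_i,h_i$, proving $2^{d-i-1}\mid c_i-d_i$ by induction on $i$.

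Without three ingredients, the proposal does not yet prove the theorem:
\begin{enumerate}
\item induction on $|E(G)|$ over the minor-closed class;
\item the exact bicycle bounds;
\item feeding the LV property of $G$ back into the defect.
\end{enumerate}
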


Planar graphs are \deltawye reducible, and the LV property is stronger than Las Vergnas' original divisibility conditions so therefore we have confirmed our original belief that Conjecture~\ref{conj3} holds for planar graphs.

\begin{figure}
\begin{center}
\begin{tikzpicture}
\tikzstyle{vertex}=[circle,draw=black, fill=white, inner sep = 0.75mm]
\tikzstyle{apex}=[circle,draw=black, fill=gray, inner sep = 0.75mm]

\node [vertex] (v0) at (0,2) {};

\node[vertex] (v1) at (-1.5,1) {};
\node[vertex] (v2) at (-0.5,1) {};
\node[vertex] (v3) at (0.5,1) {};
\node[vertex] (v4) at (1.5,1) {};

\node[vertex] (v5) at (-2,0) {};
\node[vertex] (v6) at (-1,0) {};
\node[vertex] (v7) at (1,0) {};
\node[vertex] (v8) at (2,0) {};

\node[vertex] (v9) at (-1.5,-1) {};
\node[vertex] (v10) at (-0.5,-1) {};
\node[vertex] (v11) at (0.5,-1) {};
\node[vertex] (v12) at (1.5,-1) {};

\node [vertex] (v13) at (0,-2) {};
\node [apex] (v14) at (0,0) {};

\draw (v0)--(v1);
\draw (v0)--(v2);
\draw (v0)--(v3);
\draw (v0)--(v4);
\draw (v1)--(v5);
\draw (v1)--(v6);
\draw (v1)--(v14);
\draw (v2)--(v5);
\draw (v2)--(v7);
\draw (v2)--(v14);
\draw (v3)--(v6);
\draw (v3)--(v8);
\draw (v3)--(v14);
\draw (v4)--(v7);
\draw (v4)--(v8);
\draw (v4)--(v14);
\draw (v5)--(v9);
\draw (v5)--(v10);
\draw (v6)--(v9);
\draw (v6)--(v11);
\draw (v7)--(v10);
\draw (v7)--(v12);
\draw (v8)--(v11);
\draw (v8)--(v12);
\draw (v9)--(v13);
\draw (v9)--(v14);
\draw (v10)--(v13);
\draw (v10)--(v14);
\draw (v11)--(v13);
\draw (v11)--(v14);
\draw (v12)--(v13);
\draw (v12)--(v14);

\end{tikzpicture}
\end{center}
\caption{Rhombic dodecahedron plus apex vertex}
\label{fig:rhombic}
\end{figure}


\section{Background}

Our graph-theoretic notation and terminology is mostly standard but for completeness, we briefly outline the main terms in this section.

Let $G=(V,E)$ be a graph with vertex set $V$ and edge set $E$; all our graphs may have loops and or multiple-edges. Let $k(G)$ denote the number of connected components of $G$ and let $r(G) = |V| - k(G)$ denote the \emph{rank} of $G$. For an edge $e$ of $G$, the graphs $G \setminus e$ and $G / e$ are formed by, respectively, \emph{deleting} or \emph{contracting} $e$ from $G$. Parallel edges or loops created during a contraction are retained.

For a subset $A$ of $E$, the graph $G|A$ is the graph $(V,A)$ formed from $G$ by deleting the edges not in $A$ but retaining all the vertices and we let $r_G(A)=r(G|A)$. 
The degree of a vertex $v$ in $G$ is denoted by $d_G(v)$.
 We generally omit the subscript $G$ from $r$, $d$ and similar parameters whenever the context is clear. We say that a connected graph $G$ is \emph{$2$-connected}, if there is no partition of its edge set into non-empty sets $A$ and $B$ satisfying $r(A)+r(B)=r(G)$.

The collection of subsets of $E$ with the operation of symmetric difference forms a binary vector space which has certain subspaces of interest to us.

{

A \emph{cycle} of $G$ is a subset $A \subseteq E$ such that $G|A$ is Eulerian (i.e., every vertex has even degree). Note that this use of the word ``cycle'' is more general than the usual graph-theoretic sense. An edge $e$ is a loop (in the graph theory sense) if and only if $\{e\}$ is a cycle. The symmetric difference of two cycles is a cycle, and the \emph{cycle space} $\mathcal{C}(G)$ of $G$ is the subspace consisting of all the cycles. 

A \emph{cocycle} of $G$ is a subset $A \subseteq E$ such that there is a partition of $V$ into two parts (one possibly empty) such that $A$ is the set of edges with one end in each part. An edge $e$ is a \emph{bridge} in the graph theory sense if and only if $\{e\}$ is a cocycle. The symmetric difference of two cocycles is itself a cocycle and the \emph{cocycle space} $\mathcal{C}^*(G)$ is the subspace consisting of all the cocycles. 

The \emph{bicycle space} $\mathcal{B}(G)$ of $G$ is given by $\mathcal{B}(G) = \mathcal{C}(G) \cap \mathcal{C}^*(G)$ and the elements of this subspace are called \emph{bicycles}. An edge is said to be of \emph{bicycle type} if it is contained in some bicycle. The intersection of a cycle and a cocycle always contains an even number of edges and so a bicycle intersects every cycle and every cocycle in an even number of edges. Finally, the \emph{bicycle dimension} $b(G)$ of $G$ is the dimension of the bicycle space.  See~\cite{G+R} for more information.
}


The \emph{Tutte polynomial} $T(G;x,y)$ of a graph $G=(V,E)$ is defined by
\[ T(G;x,y) = \sum_{A\subseteq E} (x-1)^{r(E)-r(A)}(y-1)^{|A|-r(A)}.\]
See~\cite{E+M} for a comprehensive account of the Tutte polynomial.
Most of this paper is motivated by the remarkable result of Rosenstiehl and Read~\cite{PrincTrip} linking bicycle dimension and the Tutte polynomial.

\begin{theorem}\label{thm:tutte-1-1}
    For every graph $G=(V,E)$,
    \[ T(G;-1,-1) = (-1)^{|E|}(-2)^{b(G)}. \]
\end{theorem}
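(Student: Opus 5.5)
The plan is to evaluate $T(G;-1,-1)$ directly from the subset expansion and turn it into a count over the binary cycle and cocycle spaces. At $x=y=-1$ each term is $(-2)^{r(E)-r(A)}(-2)^{|A|-r(A)}$. Since $(r(E)-r(A))+(|A|-r(A))\equiv r(E)+|A| \pmod 2$, we get
\[
T(G;-1,-1)=(-1)^{r(E)}\sum_{A\subseteq E}(-1)^{|A|}\,2^{|A|-r(A)}\,2^{r(E)-r(A)} .
\]
Two standard interpretations now apply. The number $2^{|A|-r(A)}$ is the number of cycles $C\subseteq A$, since $\mathcal{C}(G|A)$ has dimension $|A|-r(A)$. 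The number $2^{r(E)-r(A)}$ is the number of cocycles $D\subseteq E\setminus A$, since these are exactly the cocycles of $G/A$, which has rank $r(E)-r(A)$.

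Hence the sum counts, with sign $(-1)^{|A|}$, the triples $(A,C,D)$ with $C\in\mathcal{C}(G)$, $D\in\mathcal{C}^*(G)$ and $C\subseteq A\subseteq E\setminus D$. Swapping the order of summation, for a fixed disjoint pair $(C,D)$ the inner sum $\sum_{C\subseteq A\subseteq E\setminus D}(-1)^{|A|}$ vanishes unless $C\cup D=E$, in which case it equals $(-1)^{|C|}$. Therefore
\[
T(G;-1,-1)=(-1)^{r(E)}\sum (-1)^{|C|},
\]
where the sum runs over all cycles $C$ whose complement $E\setminus C$ is a cocycle.

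Next, the set $\mathcal{S}$ of such $C$ must be identified. It is nonempty because $E\in\mathcal{B}(G)^\perp=\mathcal{C}(G)+\mathcal{C}^*(G)$: every bicycle $B$ satisfies $|B\cap E|=|B\cap B|\equiv 0$, since a bicycle meets every cycle, in particular itself, evenly. Moreover, $\mathcal{S}$ is a coset $C_0+\mathcal{B}(G)$, so $|\mathcal{S}|=2^{b(G)}$. On this coset $|C|$ has constant parity, because $|C_0+B|=|C_0|+|B|-2|C_0\cap B|$ and $|B|$ is even. This already gives $|T(G;-1,-1)|=2^{b(G)}$, with sign $(-1)^{r(E)+|C_0|}$.

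The main obstacle is the sign. Writing $D_0=E\setminus C_0$, we need $r(E)+|C_0|\equiv |E|+b(G)$, that is,
\[
|D_0|\equiv r(G)+b(G)\pmod 2
\]
for a cocycle $D_0$ whose complement is a cycle. My first attempt would be linear algebra over GF$(2)$. Write $D_0=\delta(S)$ with indicator vector $x$, and note that the cycle condition on $E\setminus D_0$ says $Lx\equiv \mathbf{d}$, where $L$ is the Laplacian mod $2$ and $\mathbf{d}$ the degree vector. Also $|D_0|\equiv x^{T}Lx$. The aim is then to relate the parity of this quadratic form on the solution set to the rank of $L$ mod $2$, which is $r(G)-b(G)$ since $\ker L$ modulo constants corresponds to bicycles.

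If that becomes messy, the fallback is induction on $|E|$, using only the evaluation at $(-1,-1)$. Loops and bridges each multiply $T(G;-1,-1)$ by $-1$ and leave $b$ unchanged. For any other edge $e$ we have $T(G)=T(G\setminus e)+T(G/e)$, and we analyse how $b(G\setminus e)$ and $b(G/e)$ relate to $b(G)$ according to whether $e$ is of bicycle type. In that case exactly one of the two minors has bicycle dimension $b(G)-1$, or both have $b(G)+1$ with opposite signs, and the sign of the sum is then checked case by case.
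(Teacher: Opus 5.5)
Your reduction is sound as far as it goes. Three steps are all correct. First, you collapse the subset expansion to $(-1)^{r(E)}\sum(-1)^{|C|}$ over cycles $C$ whose complement is a cocycle. Second, you identify that set as a coset of $\mathcal B(G)$ via $\mathcal B(G)^\perp=\mathcal C(G)+\mathcal C^*(G)$. Third, you note that $|C|$ has constant parity on it. So you have proved $|T(G;-1,-1)|=2^{b(G)}$ and correctly reduced the sign to $|D_0|\equiv r(G)+b(G)\pmod 2$.

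That congruence, however, is the entire remaining content of the theorem, and it is left unproved. Your Laplacian translation is also correct: $Lx=\mathbf d$, $|D_0|\equiv x^TLx$, and $\operatorname{rank}L=r(G)-b(G)$. What is missing is exactly this lemma: if $L$ is a symmetric matrix over $GF(2)$ with diagonal $\mathbf d$, then every solution of $Lx=\mathbf d$ satisfies $x^TLx\equiv\operatorname{rank}L\pmod 2$. The lemma is true, for three reasons:
\begin{itemize}
\item The value is independent of the solution, because $(x+k)^TL(x+k)=x^TLx$ for $k\in\ker L$.
\item Both sides are unchanged under $L\mapsto P^TLP$, $x\mapsto P^{-1}x$. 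Here the diagonal transforms as $\mathbf d\mapsto P^T\mathbf d$, because $y^TLy=\mathbf d\cdot y$ over $GF(2)$.
\item In the canonical form, made of blocks $(1)$, hyperbolic $2\times 2$ blocks and zeros, the identity is immediate.
\end{itemize}
Your proposal only states it as an aim.

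The fallback would not work as written either, because you misstate how $b$ behaves under deletion and contraction. For $e$ neither a loop nor a bridge, the correct trichotomy is as follows.
\begin{itemize}
\item If $e$ is of bicycle type, then $b(G\setminus e)=b(G/e)=b(G)-1$.
\item Otherwise $\{e\}\in\mathcal B(G)^\perp$, so $\{e\}=C+D$ with $C$ a cycle and $D$ a cocycle. Then $b(G/e)=b(G)+1$ and $b(G\setminus e)=b(G)$ if $e\in C$, while $b(G\setminus e)=b(G)+1$ and $b(G/e)=b(G)$ if $e\in D$.
\end{itemize}
Neither of your cases (``exactly one minor has $b(G)-1$'', ``both have $b(G)+1$ with opposite signs'') ever occurs. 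Moreover, two minors with the same edge count and the same bicycle dimension contribute equal terms, not opposite ones. The correct trichotomy follows from the same $\mathcal B^\perp=\mathcal C+\mathcal C^*$ reasoning you already use. With it, the induction closes at once, since $2(-2)^{b-1}=-(-2)^{b}$ and $(-2)^{b}+(-2)^{b+1}=-(-2)^{b}$. The paper gives no proof of its own; it cites Rosenstiehl and Read, whose paper is about precisely this principal edge tripartition.
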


Putting $z=-1$ into the expression for $D(z)$ we see that $b_0 = (-1)^{|E|} 2^{b(G)}$ which implies the first of the divisibility conditions defining the LV property.




\section{\dwred graphs and knot graphs}

A \deltawye \emph{exchange} in a graph is the operation of deleting three edges $e$, $f$ and $g$ forming a triangle, i.e., a $3$-cycle, and adding a new vertex $w$ together with an edge joining $w$ to each of the three distinct end vertices of $e$, $f$ and $g$. This operation is depicted in Figure~\cref{fig:deltawye}. The inverse operation, that of deleting a vertex of degree $3$ and adding a triangle on its neighbours, is called a \wyedelta exchange.

\begin{figure}
\begin{center}
\begin{tikzpicture}[scale=1.4]
\tikzstyle{vertex}=[circle,draw=black,fill=white,inner sep=0.75mm]
\node [vertex] (v0) at (90:1cm) {};
\node [vertex] (v1) at (210:1cm) {};
\node [vertex] (v2) at (330:1cm) {};
\coordinate (vx0) at (80:1.35cm) {};
\coordinate (vx1) at (100:1.35cm) {};
\coordinate (vx2) at (200:1.35cm) {};
\coordinate (vx3) at (220:1.35cm) {};
\coordinate (vx4) at (320:1.35cm) {};
\coordinate (vx5) at (340:1.35cm) {};
\node at (30:0.7cm) {$e$};
\node at (150:0.7cm) {$f$};
\node at (270:0.7cm) {$g$};
\draw (v0)--(v1)--(v2)--(v0);
\draw (v0)--(vx0);
\draw (v0)--(vx1);
\draw (v1)--(vx2);
\draw (v1)--(vx3);
\draw (v2)--(vx4);
\draw (v2)--(vx5);
\node at (210:1.3cm) {$x$};
\node at (330:1.3cm) {$y$};
\node at (90:1.3cm) {$z$};
\begin{scope}[xshift=5cm]
\node [vertex] (v0) at (90:1cm) {};
\node [vertex] (v1) at (210:1cm) {};
\node [vertex] (v2) at (330:1cm) {};
\coordinate (vx0) at (80:1.35cm) {};
\coordinate (vx1) at (100:1.35cm) {};
\coordinate (vx2) at (200:1.35cm) {};
\coordinate (vx3) at (220:1.35cm) {};
\coordinate (vx4) at (320:1.35cm) {};
\coordinate (vx5) at (340:1.35cm) {};
\node [vertex] (c) at (0,0) {};
\node [below] at (c.south){$w$};
\node [above] at (210:0.7cm) {$e'$};
\node [above] at (330:0.7cm) {$f'$};
\node [left] at (90:0.5cm) {$g'$};
\draw (v0)--(c);
\draw (v1)--(c);
\draw (v2)--(c);
\draw (v0)--(vx0);
\draw (v0)--(vx1);
\draw (v1)--(vx2);
\draw (v1)--(vx3);
\draw (v2)--(vx4);
\draw (v2)--(vx5);

\draw[{Stealth[scale=1]}-{Stealth[scale=1]}] (-3,0.25)--(-2,0.25);
\node at (210:1.3cm) {$x$};
\node at (330:1.3cm) {$y$};
\node at (90:1.3cm) {$z$};
\end{scope}
\end{tikzpicture}
\end{center}
\caption{The \deltawye and \wyedelta exchanges}
\label{fig:deltawye}
\end{figure}

We say that graphs $G$ and $H$ are \deltawye \emph{equivalent} if $H$ may be obtained from $G$ by a finite sequence of the following moves and their inverses.
\begin{enumerate}[label=(D\arabic*),itemsep=0.25pt]
    \item Contract a bridge;
    \item Delete a loop;
    \item If edges $e$ and $f$ are parallel, then delete one of $e$ and $f$;
    \item If there is a vertex $v$ of degree two with distinct neighbours, then contract one of the edges incident with  $v$;
    \item A \deltawye exchange.
\end{enumerate}

A graph is \emph{\deltawye reducible} if it is \deltawye equivalent to a graph with no edges. We let $\mathcal D$ denote the class of \dwred graphs. A \emph{\deltawye reduction} of a graph $G$ is a finite sequence $G_0,G_1,\ldots,G_t$ of graphs so that $G_0=G$, $G_t$ is edgeless and for $i=1,\ldots,t$ the graph $G_i$ is obtained from $G_{i-1}$ by one of the moves (D1)--(D5) and their inverses. Notice that in contrast with common practice we do not insist that the number of edges of the sequence of graphs in a \deltawye reduction be weakly decreasing. Nevertheless, the result below
shows that our definition of a \dwred graph coincides with common usage.

\begin{lemma}[{\cite[Lemma~3.2]{knotgraphs}}]
\label{lem:dydec}
Every graph in $\mathcal D$ has a 
\emph{\deltawye reduction}
in which the number of edges is weakly decreasing. 
\end{lemma}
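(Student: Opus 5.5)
The plan is to take an arbitrary \deltawye reduction $G=G_0,G_1,\ldots,G_t$ of a graph $G\in\mathcal D$ and convert it into one whose edge counts never increase. The argument will be by induction on the number of edges, so its engine is a \emph{local exchange lemma}: any segment of the reduction in which the edge count first rises and then falls can be replaced by a segment that never rises.

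First I would classify the moves by their effect on $|E|$. Moves (D1)--(D4) remove exactly one edge and their inverses add one. The \deltawye and \wyedelta exchanges each preserve the edge count, since three edges are removed and three are added. So the count only increases at an inverse move: adding a bridge (inverse D1), adding a loop (inverse D2), adding a parallel edge (inverse D3), or subdividing an edge (inverse D4).

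Next I would look at a maximal peak in the reduction. This is an index $i$ at which an inverse move creates a new edge $h$, followed by a stretch of $\Delta Y$/$Y\Delta$ exchanges, and then a decreasing move. I would trace what happens to $h$ and its role through that stretch.

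The easy cases are those in which $h$ is a loop or a bridge. Such an edge lies in no triangle and meets no degree-$3$ vertex in a useful way, so it can only be removed again by the inverse of the move that created it. Because of this, the creation and the removal commute with the intermediate moves and cancel. I would check this carefully; the point to verify is that a bridge incident to a degree-$3$ vertex could alter which \wyedelta moves are available, and there one instead contracts the bridge at the outset.

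The genuinely hard case is when $h$ is a parallel edge or a subdividing edge and it then takes part in \deltawye exchanges. A parallel edge can complete a new triangle, and a subdivision vertex can later become a degree-$3$ vertex after further moves. For this case I would prove directly that each such configuration in $G_{i-1}$ admits a non-increasing sequence of moves reaching some graph reachable further along. The tools would be the standard facts that series--parallel reductions and \deltawye exchanges commute up to re-ordering. As an alternative route, I would use the characterisation that every minor of a \dwred graph is \dwred, proved via non-increasing reductions, and then induct on $|E|$: if $G_1$ has fewer edges, apply induction to $G_1$; otherwise push the first increasing move later.

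I expect the main obstacle to be exactly this commutation analysis between a newly added parallel or series edge and subsequent exchanges. Since the statement is quoted from \cite{knotgraphs}, in the paper itself one would simply cite that source.
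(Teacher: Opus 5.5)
Your main route has a genuine gap: the local exchange lemma that would flatten each peak is never proved. For the parallel-edge and subdivision cases you defer to ``standard facts that series--parallel reductions and $\Delta Y$ exchanges commute up to re-ordering'', but no such fact holds. Suppose $h$ is added parallel to an edge $g=xy$ of a triangle $\{g,yz,zx\}$, with no other $x$--$y$ edges. A $\Delta Y$ exchange on $\{h,yz,zx\}$ then gives the $\Delta Y$ exchange of $G_{i-1}$ on $\{g,yz,zx\}$ plus an extra edge $xy$, which is now parallel to nothing, so nothing cancels.

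Your ``easy'' bridge case also fails as stated. Suppose the inverse of (D1) splits a cut vertex $v$ into $x$ and $w$, with $w$ taking two edges $vy,vz$; for instance, $v$ is where a $4$-cycle $vyuz$ is attached to the rest of the graph. Then the new bridge $h=wx$ ends at a degree-three vertex $w$ with neighbours $x,y,z$. A $Y\Delta$ exchange at $w$ destroys $h$ without any contraction and yields $G_{i-1}$ plus a new chord $yz$. So it is false that $h$ can only be removed by the inverse of the move that created it. Controlling exactly these interactions is the content of Truemper's theorem that minors of $\Delta Y$-reducible graphs are $\Delta Y$-reducible. Your main plan therefore amounts to reproving that theorem, and the proposal does not do so.

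Your fallback route names the right ingredient but uses the wrong induction. Inducting on $|E|$ from the front of the sequence stalls exactly when $G_1$ has at least as many edges as $G_0$. ``Push the first increasing move later'' is the same unproved commutation.

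Instead, induct backwards along the given reduction. Let $\mathcal D'$ be the class of graphs reducible using only (D1)--(D5) and $Y\Delta$ exchanges. This is the usual, non-increasing notion, and it is the one Truemper's minor-closure theorem concerns. Certainly $G_t\in\mathcal D'$. Now suppose $G_i\in\mathcal D'$.
\begin{itemize}
\item If $G_i$ arises from $G_{i-1}$ by one of (D1)--(D5) or a $Y\Delta$ exchange, prepend that move.
\item Otherwise $G_i$ arises from $G_{i-1}$ by the inverse of one of (D1)--(D4). Then $G_{i-1}$ is obtained from $G_i$ by contracting a bridge, deleting a loop, deleting one of two parallel edges, or contracting an edge. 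So $G_{i-1}$ is a minor of $G_i$ and lies in $\mathcal D'$ by minor-closure.
\end{itemize}
Hence $G=G_0\in\mathcal D'$, with no commutation analysis needed. Note that minor-closure must be applied to $\mathcal D'$, not to $\mathcal D$ as phrased in Lemma~\ref{lem:dymin}. For $\mathcal D$ it yields nothing, since every $G_i$ is already in $\mathcal D$.
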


The following result is due to Truemper \cite{Truemper}. 

\begin{lemma}\label{lem:dymin}
The class of \dwred graphs is closed under taking minors.
\end{lemma}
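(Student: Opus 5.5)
This is Truemper's result. I will prove it by showing that each elementary minor operation preserves \deltawye reducibility. By Lemma~\ref{lem:dydec}, it suffices to handle a single deletion or contraction of an edge.

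The strategy is to induct on the length of a weakly edge-decreasing \deltawye reduction $G=G_0,G_1,\ldots,G_t$ of $G$. The claim to prove is: if $G\in\mathcal D$ and $e\in E(G)$, then $G\setminus e$ and $G/e$ lie in $\mathcal D$. Consider the first move taking $G_0$ to $G_1$. We track $e$ through it and find, for each case, a graph $G_1'$ that is a minor of $G_1$ and is \deltawye equivalent to $G\setminus e$ (respectively $G/e$). By induction $G_1'\in\mathcal D$ (since $G_1$ has a shorter reduction), so $G\setminus e$ or $G/e$ is in $\mathcal D$. Two facts needed for the induction are that each \deltawye-equivalence class is closed under the moves (D1)--(D5) and their inverses, and that moves commute with deletion and contraction when they do not involve $e$.

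The case analysis runs as follows.
\begin{itemize}
\item If the move does not touch $e$, the move commutes with deleting or contracting $e$. One small caveat: a triangle might become degenerate after $e/$ creates parallels; this is handled by replacing the \deltawye exchange with (D3)/(D4) moves.
\item If the move is (D1), (D2) or (D3) applied to $e$ itself, then deleting or contracting a bridge, loop or parallel edge gives the same result as the move, up to isolated vertices or loops, and those are removed by (D1) and (D2). This is routine.
\item For (D4), suppose $e$ is incident with a degree-two vertex $v$ and $f$ is the other edge at $v$. Then $G\setminus e$ has $f$ as a pendant edge, i.e.\ a bridge, and $G/e$ equals $G_1$ up to relabelling. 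Both cases are immediate.
\end{itemize}

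The main obstacle is the \deltawye exchange where $e$ is one of the triangle edges. Then $G\setminus e$ and $G/e$ must be related to minors of the wye graph $G_1$, whose edges are $e',f',g'$ at the new vertex $w$.
\begin{itemize}
\item Deleting a triangle edge leaves a path of two edges through their common vertex. In $G_1$, contracting the spoke at $w$ opposite to $e$ produces exactly that path, with the two remaining edges realised in series. So $G\setminus e$ is $G_1/g'$ for the appropriate spoke, up to (D4)/(D3) moves.
\item Contracting $e$ identifies two triangle vertices and turns the other two edges into a parallel pair. Deleting the corresponding spoke in $G_1$ leaves $w$ of degree two, and suppressing it with (D4) gives the same graph after a (D3) removal of the parallel edge.
\end{itemize}
I would check these identities carefully, including the degenerate situations where triangle vertices have other adjacencies creating extra parallel edges. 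Inverse moves would not arise, because Lemma~\ref{lem:dydec} lets us use only edge-nonincreasing moves. Wye-delta moves can still occur in such a reduction, and the dual check is needed for them: deleting a spoke of the wye corresponds to contracting a triangle edge, and conversely.

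With all cases verified, induction on $t$ completes the proof. Since an edgeless graph is trivially in $\mathcal D$, the base case holds.
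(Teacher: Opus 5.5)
Your dictionary for the triangle case is wrong, and so is your treatment of the bridge case. Use the paper's labels: $e=yz$, $f=xz$, $g=xy$, with spokes $e'=wx$, $f'=wy$, $g'=wz$. Deleting the spoke $e'$ and suppressing $w$ by (D4) gives $G\setminus\{f,g\}$. The vertices $y$ and $z$ stay distinct and are joined by one edge, so this cannot be $G/e$: it has $|V(G)|$ vertices, while $G/e$ has $|V(G)|-1$. The correct correspondences are:
\begin{itemize}
\item $G\setminus e\cong G_1/e'$;
\item after one (D3) move removing $g$ from the parallel pair $f,g$, we get $G/e\setminus g\cong G_1/\{f',g'\}$;
\item for a $Y\Delta$ move, deleting a spoke corresponds to deleting \emph{two} triangle edges ($G\setminus e'$ becomes $G_1\setminus\{f,g\}$ after a (D4) move), not to contracting one.
\end{itemize}
For a bridge $e$ with $G_1=G/e$, the graph $G\setminus e$ is not $G_1$ up to isolated vertices or loops. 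It has one more component. None of (D1)--(D5) or their inverses changes the number of components, so $G\setminus e$ is not $\Delta Y$ equivalent to $G_1$ or to anything derived from it. What is true is that each component of $G\setminus e$ is a minor of $G_1$, obtained by deleting all edges on the other side of the bridge. Your remaining cases are fine.

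So in these cases the graph $G_1'$ you need is a minor of $G_1$ requiring two or more deletions or contractions. Your inductive hypothesis only concerns single-edge minors of graphs with shorter reductions, so it does not reach $G_1'$. Nor can the hypothesis be iterated: knowing $G_1/f'\in\mathcal D$ tells you nothing about the length of a reduction of $G_1/f'$.

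The fix is to strengthen the statement proved by induction on $t$ to ``every minor of a graph with a reduction of length $t$ lies in $\mathcal D$''. (The paper gives no proof of its own here, citing Truemper.) You then track a whole set of deleted and contracted edges through the first move, and use that $\mathcal D$ is closed under disjoint unions for the bridge case. This formulation also makes Lemma~\ref{lem:dydec} unnecessary: if $G_1$ arises from $G$ by an inverse of (D1)--(D4), then $G$ is itself a minor of $G_1$, so every minor of $G$ is a minor of $G_1$.

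Alternatively, keep your single-edge statement but add an outer induction on $|E(G)|$. The graphs $G_1/f'$, $G_1\setminus f$ and $G_1=G/e$ that arise in the problematic cases all have fewer edges than $G$.
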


The next theorem was first proved by Epifanov in~\cite{Epifanov}, but see also~\cite{Truemper} for a conceptually very simple proof.
\begin{theorem}
\label{thm:planarisd}
    Every planar graph is \deltawye reducible.
\end{theorem}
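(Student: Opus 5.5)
The plan is to combine the minor-closure of $\mathcal D$ (Lemma~\ref{lem:dymin}) with two further facts: every planar graph is a minor of a sufficiently large square grid, and every square grid is \deltawye reducible. Granting these, take a planar graph $G$ and choose $k$ so that $G$ is a minor of the $k\times k$ grid $Q_k$. Since $Q_k\in\mathcal D$ and $\mathcal D$ is minor-closed, $G\in\mathcal D$.

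\emph{Step 1: planar graphs are grid minors.} First I reduce to the subcubic case. Fix a plane embedding of $G$ and replace each vertex $v$ of degree $d\ge 4$ by a path of $d-2$ new vertices, attaching the edges formerly at $v$ in their cyclic order. The result $G'$ is a planar graph of maximum degree at most $3$, and contracting each path recovers $G$, so $G$ is a minor of $G'$. A planar graph of maximum degree at most $3$ has a rectilinear grid drawing: vertices sit at grid points, and edges are vertex-disjoint orthogonal paths in the grid. Degree at most $3$ is enough for this, and a large enough grid suffices. Such a drawing exhibits $G'$ as a topological minor, hence a minor, of some $Q_k$. I treat this as standard and would only sketch it or cite it.

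\emph{Step 2: grids are \deltawye reducible.} This is the main obstacle. I would argue by induction on $k$, using a slightly larger family that is closed under the peeling process, such as grids with some boundary edges or corners removed. Within that family, I would show that each member is \deltawye equivalent, via moves (D1)--(D5), to a minor of a strictly smaller member. That is enough, because if $H$ is equivalent to $G$ and $H$ is a minor of a reducible graph, then $H\in\mathcal D$ by Lemma~\ref{lem:dymin}, and so $G\in\mathcal D$ by equivalence.

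The peeling itself, following Truemper, works from a corner vertex, which has degree $2$. I would series-contract it using (D4), which creates a triangle or a degree-$3$ vertex near the corner. Then I would apply a \wyedelta exchange followed by a \deltawye exchange along the boundary, so that the resulting triangle is pushed diagonally across the grid. Along the way, parallel edges and degree-$2$ vertices are cleaned up with (D3) and (D4). When the triangle reaches the opposite side it can be dissolved, and the outer row and column have effectively been absorbed. The delicate part is the bookkeeping: I must check that after one full sweep the resulting graph is a minor of the smaller member of the family, and that every exchange is legal, that is, that the triangle's end vertices are distinct. I would first work out the sweep explicitly on $Q_3$ and $Q_4$ to find the right invariant family, and only then write the general inductive step.
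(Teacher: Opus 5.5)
Your architecture is the same as Truemper's proof \cite{Truemper}: $\mathcal D$ is minor-closed (Lemma~\ref{lem:dymin}), every planar graph is a grid minor, and grids are $\Delta Y$ reducible. The paper itself gives no argument for this theorem; it only cites \cite{Epifanov} and \cite{Truemper}. Step~1 is fine. Orthogonal grid drawings of planar graphs of maximum degree at most $4$ are standard, and loops and parallel edges can be removed beforehand by (D2) and (D3). Your reduction of Step~2 to showing that each member of a suitable family is $\Delta Y$ equivalent to a minor of a smaller member is also logically sound.

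The gap is that Step~2, which carries essentially all the content, is not carried out. You specify neither the invariant family nor the sequence of moves, and you defer exactly the verification that matters. This is not a formality, because the minor relation has to point the right way: the swept graph must be a minor of a smaller reducible graph, not merely contain one. Triangle-pushing tends to leave surplus edges. For example, let $S_j$ be $Q_k$ with $(1,1),\dots,(1,j-1)$ deleted and an edge joining $(2,j-1)$ and $(1,j)$ added. Series-reducing the corner gives $S_2$. But a $Y\Delta$ exchange at $(1,j)$ followed by (D3) gives $S_{j+1}$ plus an extra edge joining $(2,j-1)$ and $(1,j+1)$. That graph has $S_{j+1}$ as a minor rather than being a minor of it, and your invariant must control such surplus over a whole sweep.

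To close the gap, you have two options. You can invoke Truemper's lemma that grids are $\Delta Y$ reducible; your proof is then exactly as complete as the paper's citation. Or you can state the family and the sweep explicitly, and verify both that each exchange is legal and that the sweep ends at a minor of a smaller member. A third route avoids grids altogether: Steinitz's minimal-lens argument on the medial graph. There, $\Delta Y$ and $Y\Delta$ exchanges act as Reidemeister~III moves, and a minimal lens can be emptied by such moves. An empty monogon or digon corresponds to a loop, a bridge, or a parallel or series pair, which (D1)--(D4) remove.
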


We say that graphs $G$ and $H$ are \emph{knot equivalent} if $H$ may be obtained from $G$ by a finite sequence of the following moves and their inverses.
\begin{enumerate}[label=(K\arabic*),itemsep=0.25pt]
    \item Contract a bridge;
    \item Delete a loop;
    \item If edges $e$ and $f$ are parallel, then delete both $e$ and $f$;
    \item If there is a vertex $v$ of degree two with distinct neighbours, then contract both edges incident with  $v$;
    \item A \deltawye exchange.
\end{enumerate}

A graph is a \emph{knot graph} if it is knot equivalent to a graph with no edges. We let $\mathcal K$ denote the class of knot graphs. 
A \emph{$\mathcal K$-reduction} of a graph $G$ is a finite sequence $G_0,G_1,\ldots,G_t$ of graphs so that $G_0=G$, $G_t$ is edgeless and for $i=1,\ldots,t$ the graph $G_i$ is obtained from $G_{i-1}$ by one of the moves (K1)--(K5) and their inverses.

The next two results from~\cite{knotgraphs} show that there is a link between knot graphs and, respectively, bicycle dimension and \dwred graphs. 

\begin{theorem}[{\cite[Theorem~4.1]{knotgraphs}}]
 \label{thm:bpreserved}
    If $G$ and $H$ are knot equivalent, then $b(G)+k(G)=b(H)+k(H)$.
\end{theorem}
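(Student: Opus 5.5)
It suffices to show that each single move (K1)--(K5) preserves $b(G)+k(G)$: if $H$ is obtained from $G$ by one move, then $b(G)+k(G)=b(H)+k(H)$. Inverses and finite sequences then follow trivially. The basic tool is the standard description of the bicycle space under deletion and contraction. For a single edge $e$, either $b(G\setminus e)=b(G/e)=b(G)-1$ (when $e$ lies in some bicycle), or one of $b(G\setminus e),b(G/e)$ equals $b(G)$ and the other equals $b(G)+1$, depending on whether $e$ lies in a cycle whose complement is a cocycle, or dually. I would rather argue directly with the vector spaces: in each case, write down an explicit linear bijection (or a map with explicitly computed kernel and cokernel) between $\mathcal{B}(G)$ and $\mathcal{B}(H)$.

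\textbf{(K1) and (K2).} If $e$ is a bridge, then $e$ lies in no cycle, and so in no bicycle. Every cycle of $G$ is a cycle of $G/e$ and conversely. A set $A\subseteq E\setminus\{e\}$ is a cocycle of $G/e$ if and only if $A$ is a cocycle of $G$ (a cut in $G/e$ lifts to a cut in $G$ not separating the ends of $e$). Hence $\mathcal{B}(G)=\mathcal{B}(G/e)$. Contracting a bridge also preserves $k$. The loop case is dual: a loop lies in no cocycle, and deleting it preserves cycles avoiding it, cocycles and $k$.

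\textbf{(K3).} Let $e,f$ be parallel and $H=G\setminus\{e,f\}$. Cocycles of $G$ contain both or neither of $e,f$, since $\{e,f\}$ is a cycle. A cocycle $C$ of $G$ restricts to a cocycle of $H$. Conversely, a cut of $H$ determined by a vertex partition extends uniquely to $G$, adding $\{e,f\}$ or not. For cycles, $A\subseteq E(H)$ is a cycle of $H$ if and only if it is a cycle of $G$, and $A\cup\{e,f\}$ is then also a cycle. Consider the map $B\mapsto B\setminus\{e,f\}$ from $\mathcal{B}(G)$ to $\mathcal{B}(H)$. It is well defined, since the restriction of a cycle that contains both or neither of $e,f$ is a cycle. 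It is injective, because its kernel is contained in $\{\emptyset,\{e,f\}\}$, and $\{e,f\}$ is a cocycle only if $e,f$ form a bond between the two sides. That happens exactly when removing them increases $k$, and in that case $\{e,f\}$ is a bicycle. It is surjective, because a bicycle of $H$ lifts to the cocycle extension, which is still a cycle. This gives $b(G)-b(H)=k(H)-k(G)\in\{0,1\}$. (K4) is the planar-dual or matroid-dual statement for a series pair. I would prove it by the same argument with cycles and cocycles swapped, keeping track of $k$, which is unchanged there but contributes symmetrically through rank. Here it may be cleaner to note that $b$ is a matroid invariant and $k$ only changes by direct-sum bookkeeping.

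\textbf{(K5).} This is the main obstacle. A $\Delta Y$ exchange keeps $k$, so I would show $b(G)=b(H)$ via an explicit isomorphism $\mathcal{B}(G)\to\mathcal{B}(H)$ that is the identity on $E\setminus\{e,f,g\}$, with $e',f',g'$ replacing $e,f,g$ according to the figure. The restriction of a cocycle of $G$ to the triangle has even size, and the restriction of a cycle of $H$ to the star has even size. For a bicycle $B$ of $G$, let $S$ be its outside part. Since $B$ is a cycle, the parity of $S$ at $x,y,z$ determines $B\cap\{e,f,g\}$ up to adding the whole triangle. Since $B$ is a cocycle, with sides given by a partition of $V$, we know which of $x,y,z$ lie together. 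Define the image as $S$ together with the star edges to those vertices on the side opposite the one assigned to $w$. Choose $w$'s side so that the degree at $w$ is even, which is possible because the triangle part is even. Then verify that this is both a cycle and a cocycle in $H$, and that the map is linear and invertible by the symmetric construction. The careful case check over the four possible intersections with the triangle is where the work lies.
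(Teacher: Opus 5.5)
Your proposal is correct and takes essentially the paper's approach: the paper only cites this theorem from Noble--Welsh, but its proof of Lemma~\ref{lem:evenbike} analyses the bicycle space move by move exactly as you do, including the correspondence $B\mapsto (B-\{e,f\})\cup\{e',f'\}$ for a $\Delta Y$ exchange, and your extra bookkeeping of $k$ through the kernel $\{\emptyset,\{e,f\}\}$ in (K3) is what turns that analysis into the present statement. A few minor corrections: in (K3) the restriction map is surjective with kernel of dimension $k(H)-k(G)$, not injective; in (K4) you should say explicitly that $b$ is unchanged because the two edges at $v$ form a cocycle but never a cycle (the neighbours of $v$ are distinct), so the kernel is trivial; and the deletion--contraction dichotomy you quote at the start fails for loops and bridges, where both minors have bicycle dimension $b(G)$, though you never use it.
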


The next result is crucial for us. It is a slight strengthening of Proposition~3.1 of~\cite{knotgraphs} and is an immediate consequence of 
Lemmas~\ref{lem:dydec} and~\ref{lem:dymin}.

\begin{proposition}[{\cite[Proposition~3.1]{knotgraphs}}]
\label{prop:dsubk}
    Every graph in $\mathcal D$ has a $\mathcal K$-reduction in which the number of edges is weakly decreasing {and move (K5) or its inverse is only applied to a loopless, bridgeless graph with no parallel edges and no vertices of degree two}. In particular, $\mathcal D \subseteq \mathcal K$. 
\end{proposition}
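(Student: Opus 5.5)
The plan is to prove the stronger statement by induction on $|E(G)|$, and to use Lemma~\ref{lem:dymin} to stay inside $\mathcal D$ after every edge-reducing step. The base case is an edgeless graph, for which the trivial sequence is a $\mathcal K$-reduction. Now let $G\in\mathcal D$ have at least one edge. Call a graph \emph{reduced} if it is loopless, bridgeless, has no parallel edges and has no vertex of degree two with distinct neighbours.

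First suppose $G$ is not reduced. We apply one edge-reducing knot move. If $G$ has a loop, delete it (K2). If it has a bridge, contract it (K1). If it has a parallel pair $e,f$, delete both (K3). If it has a degree-two vertex $v$ with distinct neighbours, contract both edges at $v$ (K4). (A degree-two vertex whose two edges go to the same neighbour gives a parallel pair, and one carrying a loop gives a loop, so these cases are already covered.) In every case the resulting graph $H$ is a minor of $G$ with fewer edges. By Lemma~\ref{lem:dymin}, $H\in\mathcal D$, so by induction $H$ has a $\mathcal K$-reduction with the required properties. Prefixing $G$ to it gives the required $\mathcal K$-reduction of $G$.

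Now suppose $G$ is reduced. By Lemma~\ref{lem:dydec}, $G$ has a $\Delta Y$ reduction $G=G_0,G_1,\ldots,G_t$ in which the number of edges is weakly decreasing. The inverses of (D1)--(D4) add an edge, and (D1)--(D4) themselves remove one, while (D5) and its inverse preserve the edge count. Since $G_t$ is edgeless and $G$ is not, there is a least index $i$ with $|E(G_i)|<|E(G)|$. Every move from $G_0$ up to $G_{i-1}$ is therefore (D5) or its inverse, and the move $G_{i-1}\to G_i$ is one of (D1)--(D4), so $G_{i-1}$ is not reduced. Let $j\le i-1$ be the least index such that $G_j$ is not reduced; $j\ge 1$ since $G_0$ is reduced. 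The moves $G_0\to G_1\to\cdots\to G_j$ are $\Delta Y$ or $Y\!\Delta$ exchanges, which are exactly (K5) or its inverse, and each is applied to one of $G_0,\ldots,G_{j-1}$, all of which are reduced by the minimality of $j$.

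Moreover $G_j$ is $\Delta Y$ equivalent to $G$, so $G_j\in\mathcal D$, and $|E(G_j)|=|E(G)|$. We now apply to $G_j$ the appropriate edge-reducing move from (K1)--(K4), exactly as in the non-reduced case, obtaining a minor $H$ of $G_j$ with fewer edges. By Lemma~\ref{lem:dymin}, $H\in\mathcal D$, and the induction hypothesis gives a good $\mathcal K$-reduction of $H$. Concatenating $G_0,\ldots,G_j,H$ with that reduction yields a $\mathcal K$-reduction of $G$ whose edge counts are weakly decreasing and in which (K5) and its inverse are only applied to reduced graphs. The inclusion $\mathcal D\subseteq\mathcal K$ follows immediately.

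The only delicate point is the choice of $j$. Taking it minimal guarantees that every exchange is applied to a reduced graph; it also has to be the case that such a $j$ exists before the edge count drops, which is why we use the index $i$ and the observation that $G_{i-1}$ is not reduced.
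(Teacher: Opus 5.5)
Your proof is correct and takes the approach the paper intends. The paper gives no proof beyond saying the proposition is an immediate consequence of Lemma~\ref{lem:dydec} and Lemma~\ref{lem:dymin}, and your induction on $|E(G)|$ supplies that argument: follow the weakly decreasing $\Delta Y$ reduction only until the first non-reduced graph, apply the corresponding edge-reducing move from (K1)--(K4), and use minor-closure to stay in $\mathcal D$.
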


Combining Theorem~\ref{thm:planarisd} and Proposition~\ref{prop:dsubk} gives the following.
\begin{corollary}
    Every planar graph is a knot graph.
\end{corollary}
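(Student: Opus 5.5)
The corollary follows by chaining two earlier results, so the plan is a short composition argument.

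Let $G$ be a planar graph. By Theorem~\ref{thm:planarisd}, $G$ is \deltawye reducible, that is, $G \in \mathcal D$. Proposition~\ref{prop:dsubk} then gives a $\mathcal K$-reduction of $G$: a finite sequence of moves (K1)--(K5) and their inverses ending in an edgeless graph. In particular it gives the inclusion $\mathcal D \subseteq \mathcal K$. Hence $G$ is knot equivalent to an edgeless graph, so $G \in \mathcal K$ and $G$ is a knot graph.

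There is essentially no obstacle here, since all the work sits in the cited results. The substantive content of Proposition~\ref{prop:dsubk} is that the ``delete one parallel edge'' and ``contract one series edge'' moves (D3), (D4) can be simulated by the paired moves (K3), (K4) together with \deltawye exchanges. That simulation uses the monotone reduction of Lemma~\ref{lem:dydec} and minor-closure from Lemma~\ref{lem:dymin}. For this corollary, however, we only need the inclusion $\mathcal D \subseteq \mathcal K$ as stated.

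The one point to check is that the definitions line up. The edgeless target graph is the same in both notions of reducibility, and the number of components plays no role in membership of $\mathcal K$, so the composition is immediate.
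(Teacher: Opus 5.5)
Your proof is correct and is exactly the paper's argument: the corollary is obtained by combining Theorem~\ref{thm:planarisd} (every planar graph is $\Delta Y$-reducible) with the inclusion $\mathcal D \subseteq \mathcal K$ from Proposition~\ref{prop:dsubk}. Your aside about how that proposition is proved is only loosely accurate, since the (D3)/(D4) steps are not simulated but replaced by (K3)/(K4) steps, with minor-closure and induction on the number of edges showing the resulting graph is still in $\mathcal D$; this does not affect the corollary.
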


In contrast with $\mathcal D$, it is not known whether every graph in $\mathcal K$ has a $\mathcal K$-reduction in which the number of edges is weakly decreasing.
Consequently, it is not at all obvious that there are actually any graphs that are not knot graphs. In a private communication reported in~\cite{knotgraphs}, but without giving any details, D.~Archdeacon, N.~Robertson, P. D.~Seymour, R.~Thomas, and D. L.~Vertigan showed that complete graphs with at least $6$ vertices are not knot graphs. We now present a slight simplification of their argument, which shows that $K_6$ is not a knot graph. Recall that an edge $e$ of $G$ is of \emph{bicycle type} if $e \in B$ for some bicycle $B \in \mathcal{B}(G)$.

\begin{lemma}\label{lem:evenbike}
    Every knot graph has an even number of edges of bicycle type.
\end{lemma}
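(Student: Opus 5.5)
The plan is to show that the number of edges of bicycle type has the same parity before and after each of the moves (K1)--(K5) and their inverses. An edgeless graph has no edges of bicycle type, so the lemma then follows by induction along a $\mathcal K$-reduction. Throughout, let $\sigma(G)$ denote the support of $\mathcal B(G)$, that is, the set of edges of bicycle type. Since moves can be applied in either direction, for each move it is enough to compare $\sigma(G)$ and $\sigma(H)$, where $H$ is the result of the move. I will repeatedly use that over $\mathrm{GF}(2)$ the cocycle space is the orthogonal complement of the cycle space. Hence a bicycle is exactly a cycle that meets every cycle evenly, and dually a cocycle that meets every cocycle evenly.

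\textbf{Moves (K1) and (K2).} A bridge $e$ forms the cocycle $\{e\}$, so no bicycle contains $e$. Contracting $e$ leaves the cycle space unchanged. The cocycles of $G/e$ are exactly the cocycles of $G$ avoiding $e$. Therefore $\mathcal B(G/e)=\mathcal B(G)$ and $\sigma$ is unchanged. Move (K2) is the dual statement: a loop $\{e\}$ is a cycle, so it lies in no bicycle, and deleting it leaves the bicycle space unchanged.

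\textbf{Move (K3).} Let $e,f$ be parallel edges and $H=G\setminus\{e,f\}$. Since $\{e,f\}$ is a cycle, every bicycle $B$ of $G$ contains both or neither of $e,f$.
\begin{itemize}
\item The map $B\mapsto B-\{e,f\}$ sends $\mathcal B(G)$ into $\mathcal B(H)$. Indeed, $B-\{e,f\}$ is $B$ or $B\symdiff\{e,f\}$, hence a cycle. It is also the restriction of a cocycle of $G$, hence a cocycle of $H$.
\item This map is onto. A bicycle $C$ of $H$ is a cycle of $G$ and is the cut of some vertex bipartition. If $e,f$ cross that bipartition, then $C\cup\{e,f\}$ is a cycle and a cocycle of $G$; otherwise $C$ itself is a bicycle of $G$.
\end{itemize}
Consequently $\sigma(G)$ is either $\sigma(H)$ or $\sigma(H)\cup\{e,f\}$, and the parity is preserved. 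Move (K4) is the exact dual argument: the two edges at a degree-two vertex form a cocycle, and contracting both replaces deletion by contraction.

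\textbf{Move (K5), the main step.} Let the triangle $T=\{e,f,g\}$ on $x,y,z$ be replaced by the star $S=\{e',f',g'\}$ at $w$, and let $R$ be the common remaining edge set. For any $A\subseteq T$, let $\partial A$ be its mod-$2$ boundary on $\{x,y,z\}$. The boundary map is a linear bijection from the even subsets of $T$ onto the vectors in $\mathrm{GF}(2)^{\{x,y,z\}}$ of even weight. The same holds for the even subsets of $S$, where evenness is the condition at $w$. Write $\phi$ for the resulting composite bijection from even subsets of $T$ to even subsets of $S$.

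A bicycle $B$ of $G$ meets the cycle $T$ evenly, so $B\cap T$ is even. Define
\[ B' = (B\cap R)\cup \phi(B\cap T). \]
\begin{itemize}
\item $B'$ is a cycle of $G'$, because it has the same boundary at $x,y,z$ as $B$ and even degree at $w$.
\item $B'$ meets every cycle $C'$ of $G'$ evenly. For this I will transport $C'$ back to a cycle $C$ of $G$ by the analogous rule. Then $|B'\cap C'|\equiv |B\cap C|\pmod 2$ should follow from the fact that, on even sets, the parity of the intersection on $T$ and on $S$ is determined by the boundaries alone.
\item The map $B\mapsto B'$ is linear and injective, since $\phi$ is injective.
\end{itemize}
Theorem~\ref{thm:bpreserved} gives $b(G)=b(G')$, because (K5) does not change the number of components. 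So this map is an isomorphism of bicycle spaces, and $\sigma(G)\cap R=\sigma(G')\cap R$.

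It remains to compare the supports on $T$ and $S$. The restriction of $\mathcal B(G)$ to $T$ is a subspace $W$ of the even subsets of $T$, and the restriction of $\mathcal B(G')$ to $S$ is $\phi(W)$, which has the same dimension. An even-weight subspace of $\mathrm{GF}(2)^3$ has support of size $0$, $2$ or $3$ according as its dimension is $0$, $1$ or $2$. Hence $|\sigma(G)\cap T|=|\sigma(G')\cap S|$, which gives even equality of the counts.

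I expect the main obstacle to be the orthogonality check in the second bullet above. The identity $|B'\cap C'|\equiv|B\cap C|$ requires the inner products on $T$ and on $S$ to agree. I will first verify that the bilinear form $(A_1,A_2)\mapsto |A_1\cap A_2|$ on even subsets of $T$ matches the form on even subsets of $S$ under $\phi$, by checking it on a basis such as $\{e,f\},\{f,g\}$. If it does not match, I will instead characterise cocycles of $G'$ directly through vertex bipartitions, choosing the side of $w$ appropriately.
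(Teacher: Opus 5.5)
Your proposal is correct and takes essentially the paper's route. You compare bicycle-type edges move by move: the count is unchanged for (K1), (K2) and (K5), and changes only by $\{e,f\}$ for (K3) and (K4). For (K5), your boundary-preserving $\phi$ is exactly the paper's correspondence $\{p,q\}\mapsto\{p',q'\}$, where $p'$ is the star edge at the vertex opposite $p$.

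The orthogonality check you flag does hold. For an even $A\subseteq T$, the edge opposite $v$ lies in $A$ if and only if $v\in\partial A$, so $|A_1\cap A_2|=|\phi(A_1)\cap\phi(A_2)|$. The paper instead exhibits the bipartition directly (put $w$ on the side of the common end of the two triangle edges in $B$) and builds the inverse map in the same way rather than invoking Theorem~\ref{thm:bpreserved}; these are minor variations.
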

\begin{proof}
We will show that if $H$ is obtained from $G$ by one of the moves (K1)--(K5) then $H$ has either the same number or two fewer edges of bicycle type than $G$. 

Suppose first that $e$ is a bridge of $G$ and that $H=G/e$. 
As every bicycle is a cycle, $e$ cannot be contained in a bicycle. Moreover a subset $A$ of $E(H)$ is a bicycle in $H$ if and only if it is a bicycle in $G$. Thus $G$ and $H$ have the same number of bicycle edges. A similar argument covers the case where $e$ is a loop of $G$ and $H=G\setminus e$.

Now suppose that $e$ and $f$ are parallel in $G$ and that $H=G\setminus \{e,f\}$. For any partition of the vertices of $G$ into two parts, the endvertices of $e$ belong to different parts if and only if those of $f$ do. Thus every bicycle of $G$ contains either both $e$ and $f$, or neither $e$ or $f$. Moreover, if $B$ is a bicycle of $G$, then $B-\{e,f\}$ is a bicycle of $H$, and if $B$ is a bicycle of $H$, then either $B$ or $B\cup \{e,f\}$ is a bicycle of $G$. Thus $H$ has either the same number or two fewer bicycle edges than $G$. A similar argument covers the case where $H$ is obtained from $G$ by applying (K4).

Finally, suppose that $e$, $f$, $g$ form a triangle in $G$ and $H$ is obtained from $G$ by a \deltawye exchange removing $e$, $f$ and $g$. Label the endvertices of $e$, $f$ and $g$ in $G$ so that $e=yz$, $f=xz$ and $g=xy$. Then $H$ has a new vertex, which we shall label $w$, and new edges $e'=wx$, $f'=wy$ and $g'=wz$; this configuration is depicted in Figure~\ref{fig:deltawye}.

Let $A$ be a subset of $E(G)-\{e,f,g\}$. Then it is straightforward to check that $A$ is a bicycle of $G$ if and only if it is a bicycle of $H$. Recall that a bicycle meets every cycle and every cocycle in an even number of edges. Thus if $B$ is a bicycle in $G$ meeting $\{e,f,g\}$ then $|B\cap \{e,f,g\}|=2$. 
Let $B$ be a bicycle of $G$ and
suppose without loss of generality that $B\cap \{e,f,g\}=\{e,f\}$. Now let $B'=(B-\{e,f\})\cup\{e',f'\}$. Then $d_{H|B'}(z)=d_{G|B}(z)-2$, but every other vertex of $G$ has the same degree in $G|B$ and $H|B'$. Moreover, $d_{H|B'}(w)=2$. Thus $B'\in \mathcal C(H)$. A partition of $V(G)$ into two parts so that $B$ is the set of edges with one endvertex in each part must have $z$ in one part and $x$ and $y$ in the other. Adding $w$ to the part containing $z$ gives a partition of $V(H)$ into two parts so that $B'$ is the set of edges with one endvertex in each part. Thus $B'\in \mathcal C^*(H)$, establishing that $B'$ is a bicycle of $H$. Reversing this argument shows that if $B'$ is a bicycle of $H$ with, say, $B'\cap\{e',f',g'\}=\{e',f'\}$, then $(B'-\{e',f'\})\cup\{e,f\}$ is a bicycle of $G$.  

It follows that every edge of $E(G)-\{e,f,g\}$ has bicycle type in $G$ if and only if it has bicycle type in $H$. Furthermore, for each $x$ in $\{e,f,g\}$, $x$ has bicycle type in $G$ if and only if $x'$ has bicycle type in $H$. So $G$ and $H$ have the same number of edges with bicycle type.

As a graph with no edges has an even number of edges of bicycle type, the proof now follows by induction on the length of a $\mathcal K$-reduction. 
\end{proof}

\begin{theorem}
The complete graph $K_6$ is not a knot graph.
\end{theorem}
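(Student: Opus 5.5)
Since \cref{lem:evenbike} says every knot graph has an even number of edges of bicycle type, it suffices to show that $K_6$ has an \emph{odd} number of edges of bicycle type. As $K_6$ is edge-transitive, either every one of its $15$ edges is of bicycle type or none is. So the whole proof reduces to exhibiting a single nonzero bicycle of $K_6$; then all $15$ edges have bicycle type, $15$ is odd, and $K_6$ cannot be a knot graph.

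To find a bicycle, note that cocycles of $K_6$ are the edge cuts $\delta(S)$ for $S\subseteq V$. The cut $\delta(S)$ is a cycle exactly when every vertex has even degree in $K_6|\delta(S)$. A vertex in $S$ has degree $6-|S|$ there, and a vertex outside $S$ has degree $|S|$. Taking $|S|=2$ gives degrees $4$ and $2$, both even. Hence $\delta(S)$ with $|S|=2$ is a nonempty cycle and cocycle, i.e.\ a bicycle with $8$ edges.

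I will also remark that $b(K_6)\geq 1$, although this is not needed for the argument. Edge-transitivity is used to pass from one bicycle to every edge. Equivalently, one can say directly that any edge $uv$ lies in $\delta(\{u,x\})$ for any $x\neq u,v$.

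There is no real obstacle here. The only points to check are the parity computation for $|S|=2$ and that the definition of ``bicycle type'' in \cref{lem:evenbike} counts edges (not bicycles), so that $15$ odd contradicts the lemma.
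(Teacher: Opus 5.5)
Your proof is correct. It follows the same outline as the paper: Lemma~\ref{lem:evenbike}, combined with the fact that all $15$ edges of $K_6$ have bicycle type. The difference is in how you show that a nonzero bicycle exists.

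The paper evaluates $T(K_6;-1,-1)$ and applies Theorem~\ref{thm:tutte-1-1} to get $b(K_6)=4$, then uses symmetry to pass from one bicycle edge to all of them. You instead exhibit an explicit bicycle $\delta(S)$ with $|S|=2$. Your observation that $uv\in\delta(\{u,x\})$ for any $x\neq u,v$ then shows directly that every edge has bicycle type, so edge-transitivity is not even needed.

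Your route is fully elementary and avoids computing a Tutte polynomial evaluation. The paper's route gives the exact bicycle dimension, but your construction recovers that as well. The cut $\delta(S)$ is a cycle precisely when $|S|$ is even, and each cut arises from exactly two sets $S$. This gives $2^{5}/2=16$ bicycles, so $b(K_6)=4$.
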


\begin{proof}
    By symmetry, either $K_6$ has no edges of bicycle type or every edge has bicycle type. By computing $T(K_6;-1,-1)$ and applying Theorem~\ref{thm:tutte-1-1}, we see that $b(K_6)=4$, so the latter possibility must hold. Thus $K_6$ has $15$ edges of bicycle type and Lemma~\ref{lem:evenbike} implies that it cannot be a knot graph.
\end{proof}

\begin{corollary}
    The class of knot graphs is not closed under minors.
\end{corollary}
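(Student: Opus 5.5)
The plan is to combine the preceding theorem with Lemma~\ref{lem:dymin}. Lemma~\ref{lem:dymin} says that the class $\mathcal D$ of \dwred graphs is closed under taking minors, and Proposition~\ref{prop:dsubk} gives $\mathcal D \subseteq \mathcal K$. So if the class $\mathcal K$ of knot graphs were closed under minors, it would contain every minor of every knot graph. We therefore need to exhibit a knot graph that has a minor which is not a knot graph.

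The natural candidate is a knot graph having $K_6$ as a minor, since we already know that $K_6 \notin \mathcal K$. The step I would do first is to find a planar graph (hence a knot graph by the corollary following Proposition~\ref{prop:dsubk}) or some other explicit knot graph with a $K_6$ minor.

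\begin{itemize}
\item A planar graph cannot have a $K_6$ minor. Likewise, $\mathcal D$ is minor-closed and does not contain $K_6$, because $K_6 \notin \mathcal K \supseteq \mathcal D$. So the witness must be a knot graph that is not \deltawye reducible.
\item Instead, I would use the moves directly. Take $K_6$ and apply inverses of (K3) or (K4): add a pair of parallel edges, or subdivide an edge twice. For example, replace an edge $e$ by a path of length three. That path has two degree-two vertices with distinct neighbours, so by (K4) contracting both edges at one of them returns the original edge. The resulting graph is knot equivalent to $K_6$ with $e$ deleted, not to $K_6$ itself, so it does not by itself give a knot graph with a $K_6$ minor.
\end{itemize}

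A better route is to find a graph $G$ that is knot equivalent to an edgeless graph and contains $K_6$ as a minor. Natural choices are $K_6$ with one edge doubled (add an edge parallel to an existing one), or $K_6$ with some edge subdivided once. Either graph contains $K_6$ as a minor. It would then remain to show that the graph is a knot graph.

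The key obstacle is this last step: showing that some specific graph with a $K_6$ minor actually reduces to an edgeless graph under (K1)--(K5). Lemma~\ref{lem:evenbike} gives a parity check that the candidate must pass. I would first compute the bicycle types for $K_6$ with one edge subdivided. Its bicycle-type count should be even, since that edge's two halves now act together. If it passes, I would search for an explicit reduction sequence. Such a sequence would apply \deltawye and \wyedelta exchanges to create parallel pairs, remove those pairs with (K3), and use (K4) on series pairs, continuing until no edges remain. The search could be done by computer if necessary. Once one such witness is found, the non-closure under minors follows immediately.
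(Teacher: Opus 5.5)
Your strategy of exhibiting a knot graph with a $K_6$ minor is the right one, but the proposal stops at the one step that actually requires proof: you never establish that any specific graph with a $K_6$ minor is a knot graph. The parity test from Lemma~\ref{lem:evenbike} is only a necessary condition, and ``search for a reduction, by computer if necessary'' is not an argument. The missing observation is that (K3) deletes parallel edges two at a time. So any graph in which every parallel class has even size reduces to an edgeless graph using (K3) alone. Taking $K_6$ with every edge doubled therefore gives a knot graph directly from the definition, and deleting one edge from each pair shows it has $K_6$ as a minor. That is the paper's whole proof; it needs no $\Delta Y$ exchanges, no planarity, and no use of $\mathcal D$.

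Your own two candidates also work once you add one more step. Doubling an edge $e$ and applying (K3) to the pair gives $K_6\setminus e$. Subdividing $e$ once and applying (K4) at the new vertex gives $K_6/e$. Each of these is a proper minor of $K_6$, and $K_6$ is an excluded minor for $\mathcal D$ (as stated in the introduction). So each lies in $\mathcal D\subseteq\mathcal K$ by Proposition~\ref{prop:dsubk}, and knot equivalence then makes your candidate a knot graph. Finally, your remark about the path of length three is wrong as stated. Contracting both edges at one subdivision vertex gives back $K_6$ itself, so that graph is knot equivalent to $K_6$, not to $K_6\setminus e$. That is exactly why it cannot be a knot graph.
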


\begin{proof}
    Every graph in which each parallel class contains an even number of edges is a knot graph. In particular, the graph formed from $K_6$ by adding an edge in parallel with each edge is a knot graph. But this graph contains $K_6$ as a minor.
\end{proof}

Later we will need one more result which is a special case of a result of Rosenstiehl and Read~\cite{PrincTrip}.

\begin{proposition}\label{prop:bikeminors}
    Let $G$ be a graph with edge $e$. Then $b(G/e)\geq b(G)-1$ and $b(G\setminus e)\geq b(G)-1$.
\end{proposition}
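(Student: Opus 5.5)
We prove the two inequalities using Theorem~\ref{thm:tutte-1-1} together with the deletion--contraction recurrence for the Tutte polynomial. A direct linear-algebra argument on the bicycle space also works, and we give it as a cross-check.

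\emph{Degenerate cases.} If $e$ is a loop, then $e$ lies in no cocycle and hence in no bicycle. Moreover $\mathcal C(G\setminus e)$ consists of the cycles of $G$ avoiding $e$, and $\mathcal C^*(G\setminus e)=\mathcal C^*(G)$. Hence $\mathcal B(G\setminus e)=\mathcal B(G)$. Also $G/e=G\setminus e$ for a loop. So both inequalities hold with equality. The case where $e$ is a bridge is dual: $e$ lies in no cycle, and the same argument gives $b(G/e)=b(G\setminus e)=b(G)$.

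\emph{Main case, via Tutte.} Suppose $e$ is neither a loop nor a bridge. Then
\[T(G;-1,-1)=T(G\setminus e;-1,-1)+T(G/e;-1,-1).\]
Write $m=|E(G)|$. By Theorem~\ref{thm:tutte-1-1}, this becomes
\[(-1)^m(-2)^{b(G)}=(-1)^{m-1}\bigl((-2)^{b(G\setminus e)}+(-2)^{b(G/e)}\bigr).\]
Put $a=b(G\setminus e)$ and $c=b(G/e)$. Then $2^{b(G)}=|(-2)^a+(-2)^c|$. If $a\neq c$, say $a<c$, then the right-hand side equals $2^a|1+(-2)^{c-a}|$. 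The factor $1+(-2)^{c-a}$ is odd with absolute value at least $1$, so it must equal $\pm1$. That forces $(-2)^{c-a}\in\{0,-2\}$, so $c-a=1$ and the sum is $-2^a$. Then $b(G)=a=\min(a,c)$, and both inequalities hold. If $a=c$, then $(-2)^a+(-2)^c=2(-2)^a$, so $b(G)=a+1$ and again $a,c\geq b(G)-1$.

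\emph{Cross-check, via linear algebra.} For deletion, the map $B\mapsto B-\{e\}$ restricted to bicycles of $G$ avoiding $e$ lands in $\mathcal B(G\setminus e)$. These bicycles are cycles of $G\setminus e$, and they are cocycles of $G\setminus e$ because deleting an edge keeps induced cuts. The bicycles avoiding $e$ form a subspace of codimension at most $1$ in $\mathcal B(G)$, since it is the kernel of a linear functional. This gives $b(G\setminus e)\geq b(G)-1$. The contraction case is dual.

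The only delicate point is the parity argument in the main case, which fixes the possible values of $a$ and $c$. The Tutte computation itself is routine.
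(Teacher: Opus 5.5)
Your proof is correct, and both of your arguments go through. The paper itself gives no proof here: it quotes the statement as a special case of Rosenstiehl and Read's work on the principal edge tripartition, which determines $b(G\setminus e)$ and $b(G/e)$ exactly according to the type of $e$.

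Your Tutte computation extracts this neatly from Theorem~\ref{thm:tutte-1-1}. The odd-factor argument in fact shows that, for an edge that is neither a loop nor a bridge, either $b(G\setminus e)=b(G/e)=b(G)-1$ or $\{b(G\setminus e),b(G/e)\}=\{b(G),b(G)+1\}$. In effect this recovers the numerical part of the Rosenstiehl--Read result, although it does not tell you which case a given edge falls into. Bear in mind that this route derives one Rosenstiehl--Read result from another, so it is only as elementary as Theorem~\ref{thm:tutte-1-1}. There is also a small slip: when $c=a+1$ the sum is $-(-2)^a$, not $-2^a$. Only its absolute value is used, so nothing breaks.

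Your linear-algebra argument is the better standalone proof. It uses nothing beyond the definitions and needs no separate treatment of loops and bridges. It is also exactly the mechanism the paper uses in Lemma~\ref{lem:bikelower}: bicycles avoiding the affected edges remain bicycles after deletion or contraction.

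For the contraction half, appealing to duality is slightly loose, since a nonplanar graph has no dual graph and you would really mean matroid duality. It is just as quick to check directly. A bicycle avoiding $e$ stays Eulerian when the ends of $e$ are identified, because their degrees add. It also stays a cut, because the ends of $e$ lie on the same side of the defining vertex partition.
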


\section{The LV property}

In this section we consider two different expressions for the diagonal Tutte polynomial $T(G;z,z)$, namely
\[
T(G;z,z)= \sum_{k=0}^{|E(G)|} b_k(G)(z+1)^k
\]
and
\[
T(G;z,z)= \sum_{k=0}^{|E(G)|} f_k(G)(z-1)^k.
\]

Here $b_0(G) = \pm 2^{b(G)}$ and $f_0(G)$ is the number of maximal spanning forests of $G$, so if $G$ is connected, then $f_0(G)$ is the number of spanning trees of $G$. Berman~\cite{berman} gave a beautiful algebraic proof that the number of bicycles of a graph divides the number of spanning trees and hence $2^b(G) \divides f_0(G)$.

As described in Section~\ref{sec:intro}, we defined the LV property as a divisibility property on the coefficients $\{b_k(G)\}$. However it turns out to be more convenient to work with the coefficients $\{f_k(G)\}$ and so we need to translate the LV property into a condition on the coefficients $\{f_k(G)\}$. 

\begin{theorem}
If $G$ is a graph with bicycle dimension $b(G)$ then with the notation above, a graph has the LV property if and only if $2^{b(G)-k} \mid f_k(G)$ for all $k \leqslant b(G)$.
\end{theorem}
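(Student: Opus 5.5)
The plan is to relate the two coefficient sequences by re-expanding around a different point: write $z-1=(z+1)-2$ and $z+1=(z-1)+2$. Set $d=b(G)$. Expanding $(z-1)^j=\sum_{k}\binom{j}{k}(-2)^{j-k}(z+1)^k$ and comparing coefficients gives
\[
b_k(G)=\sum_{j\geq k}\binom{j}{k}(-2)^{j-k} f_j(G),
\]
and symmetrically, from $(z+1)^j=((z-1)+2)^j$,
\[
f_k(G)=\sum_{j\geq k}\binom{j}{k}2^{j-k} b_j(G).
\]
Because the two formulas have the same shape, one argument should handle both directions of the equivalence.

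Suppose $2^{d-j}\mid f_j(G)$ for all $j\leq d$, and fix $k\leq d$. I will check that every term of the sum for $b_k(G)$ is divisible by $2^{d-k}$. There are two cases.
\begin{itemize}
\item For $k\leq j\leq d$, the term $\binom{j}{k}(-2)^{j-k}f_j(G)$ is divisible by $2^{j-k}\cdot 2^{d-j}=2^{d-k}$.
\item For $j>d$, there is no hypothesis on $f_j(G)$. It is still an integer, so the term is divisible by $2^{j-k}$, and $2^{j-k}$ is divisible by $2^{d-k}$ since $j-k>d-k$.
\end{itemize}
Hence $2^{d-k}\mid b_k(G)$ for all $k\leq d$, which is the LV property of Definition~\ref{lvproperty}.

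The converse uses the identical computation with the second identity. Each term $\binom{j}{k}2^{j-k}b_j(G)$ is divisible by $2^{d-k}$: for $k\leq j\leq d$ because $2^{d-j}\mid b_j(G)$, and for $j>d$ because of the power $2^{j-k}$ alone.

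The only points needing care are that the $b_j(G)$ and $f_j(G)$ are all integers, and that the binomial re-expansion is set up correctly. Integrality holds because $T(G;x,y)$ has integer coefficients, so both Taylor expansions of the integer polynomial $T(G;z,z)$ have integer coefficients. The rest is bookkeeping, so I expect no real obstacle. Note that the case $k=0$ needs no separate treatment: it is covered by the same argument, and it is consistent with Berman's result and with Theorem~\ref{thm:tutte-1-1}.
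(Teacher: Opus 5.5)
Your proof is correct and follows the paper's own argument. The paper derives the same two re-expansion identities, $f_j(G)=\sum_{i\geq j}\binom{i}{j}2^{i-j}b_i(G)$ and $b_j(G)=\sum_{i\geq j}\binom{i}{j}(-2)^{i-j}f_i(G)$, and checks divisibility term by term, splitting at $i\leq b(G)$ versus $i>b(G)$ just as you do; your explicit remark that both coefficient sequences are integers is a point the paper leaves implicit.
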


\begin{proof}
Suppose that $G$ has the LV property as defined by Definition~\ref{lvproperty}. Substituting $z+1 = (z-1)+2$ and expanding each of the terms $((z-1)+2)^i$ using the binomial theorem, we have
\begin{align*}
    T(G;z,z)&= \sum_{i=0}^{|E(G)|} b_i(G) (z+1)^i
    = \sum_{i=0}^{|E(G)|} b_i(G) \sum_{j=0}^i \binom ij (z-1)^j 2^{i-j}\\
    &= \sum_{j=0}^{|E(G)|} \sum_{i=j}^{|E(G)|} \binom ij 2^{i-j} b_i(G) (z-1)^j.
\end{align*}
Thus
\[ f_j(G) = \sum_{i=j}^{|E(G)|} \binom ij 2^{i-j} b_i(G).\]
Choose $j$ with $0\leq j \leq b(G)$. 
Then, as $G$ has the LV property, for $i$ with $j \leq i \leq b(G)$ we have $2^{b(G)-i} \divides b_i(G)$, so $2^{b(G)-j}\divides 2^{i-j}b_i(G)$. And for $i\geq b(G)$ we have $b(G)-j\leq i-j$, so $2^{b(G)-j} \divides 2^{i-j}b_i(G)$. Thus $2^{b(G)-j} \divides f_j(G)$ as required. The reverse argument proceeds very similarly as one may show that
\[ b_j(G) = \sum_{i=j}^{|E(G)|} \binom ij (-2)^{i-j} f_i(G).\qedhere\]
\end{proof}

In other words, the exact same divisibility conditions can be used for either the $\{b_i(G)\}$ or $\{f_i(G)\}$ coefficients. While $b_0(G)$ and $f_0(G)$ can each be interpreted combinatorially, there is no clear interpretation for $b_i(G)$ when $i > 0$. However, the coefficients $\{f_i(G)\}$ do have an interpretation, which we will exploit in the next section to establish that \dwred graphs have the LV property. 
For a graph $G$ let  
\[ \mathcal T(G) = \{A \subseteq E(G) : G|A \text{ is a maximal spanning forest}\}.\] 
So when $G$ is connected $\mathcal T(G)$ is just the collection of (edge sets of) spanning trees of $G$. For a subset $A$ of the edges of $G$, we define its \emph{nullity} in $G$, denoted by $n_G(A)$, to be
\[ n_G(A) = \min_{T \in \mathcal T(G)} |T\symdiff A|.\]
It is straightforward to check that $n_G(A)=r(G)+|A|-2r(A)$.
Thus
\[ T(G;z,z)  = \sum_{A\subseteq E(G)} (z-1)^{n(A)},\]
so for $i=0,\ldots,|E(G)|$ we have
\[ f_i(G) = |\{A\subseteq E(G): n(A)=i\}|.\]
We let 
\[R(G;z)=T(G;z+1,z+1)=\sum_{A\subseteq E(G)} z^{n(A)} = \sum_{i=0}^{|E(G)|} f_i(G)z^i.\]
It is convenient to set $f_i(G)=0$ for $i<0$.

Finally, for use in the next section, we describe how the nullity of a set of edges changes when we make a small change to the set or the graph itself.

\begin{lemma}\label{lem:nullchanges}
Let $G$ be a graph with edge $e$, and let $A$ be a subset of $E(G)$ with $e\notin A$. Then
\[ n_G(A\cup \{e\}) = 
\begin{cases}
    n_G(A) + 1, & \text{if both ends of $e$ lie in one component of $G|A$;}\\
    n_G(A)-1, & \text{otherwise.}
\end{cases}\]
Moreover, if $e$ is not a bridge of $G$, then $n_{G\setminus e}(A)=n_G(A)$ and if $e$ is not a loop of $G$, 
then $n_{G/e}(A)=n_G(A \cup e)$.
\end{lemma}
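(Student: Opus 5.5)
The statement is Lemma~\ref{lem:nullchanges}. I will work from the formula $n_G(A)=r(G)+|A|-2r(A)$ recorded just before it, taking that identity as the starting point. With it, every claim reduces to tracking how $|A|$, $r(A)$ and $r(G)$ change under adding $e$, deleting $e$ or contracting $e$. If one wants to justify the formula itself, the route is as follows. Take a maximal spanning forest $T$ that contains a maximal spanning forest $F$ of $G|A$; this is possible by extending $F$ greedily. Then $|T\symdiff A| = |T|-|F| + |A|-|F| = r(G)+|A|-2r(A)$. Conversely, for any $T\in\mathcal T(G)$ we have $|T\cap A|\leq r(A)$, which gives the lower bound.

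For the first part, let $e\notin A$. We have $|A\cup\{e\}|=|A|+1$, and $r(G)$ is unchanged. The rank $r(A\cup\{e\})$ equals $r(A)$ exactly when both ends of $e$ lie in one component of $G|A$, since then the number of components of $G|A$ does not change. Otherwise $e$ merges two components and $r(A\cup\{e\})=r(A)+1$. Substituting into the formula gives $n_G(A)+1-0=n_G(A)+1$ in the first case and $n_G(A)+1-2=n_G(A)-1$ in the second.

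For deletion, suppose $e$ is not a bridge and $A\subseteq E(G)-e$. Then $r(G\setminus e)=r(G)$. Also $r_{G\setminus e}(A)=r_G(A)$, because $G\setminus e$ has the same vertex set and $(G\setminus e)|A=G|A$. Hence $n_{G\setminus e}(A)=n_G(A)$.

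For contraction, suppose $e$ is not a loop. Then $G/e$ has one fewer vertex and the same number of components, so $r(G/e)=r(G)-1$. For $A\subseteq E(G)-e$, the graph $(G/e)|A$ is $(G|(A\cup e))/e$, which has one fewer vertex than $G|(A\cup e)$ and the same components. So $r_{G/e}(A)=r_G(A\cup e)-1$. Therefore
\[ n_{G/e}(A)=r(G)-1+|A|-2r_G(A\cup e)+2 = r(G)+|A\cup e|-2r_G(A\cup e)=n_G(A\cup e).\]
There is no real obstacle here. The only point needing care is the contraction identity $(G/e)|A=(G|(A\cup e))/e$, together with the fact that contracting a non-loop preserves the number of components.
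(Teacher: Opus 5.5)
Your proof is correct and takes the same approach as the paper. The paper says only that the lemma follows easily from $n_G(A)=r(G)+|A|-2r(A)$, and you have carried out that bookkeeping for adding, deleting and contracting $e$; note that your step $|T\symdiff A|=|T|-|F|+|A|-|F|$ tacitly uses $T\cap A=F$, which holds because $T\cap A$ is a forest of $G|A$ containing the maximal spanning forest $F$.
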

\begin{proof}
The proof follows easily from the definition of nullity in terms of rank.
\end{proof}

\section{Main result}

This section is devoted to the proof of our main result.
\begin{theorem}
    Every \dwred graph has the LV property.
\end{theorem}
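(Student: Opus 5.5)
Throughout, use the $f_k$-formulation of the LV property from the previous section: $G$ has the LV property iff $2^{b(G)-k}\mid f_k(G)$ for all $k\le b(G)$, where $R(G;z)=\sum_i f_i(G)z^i$. Induct on $|E(G)|$. By Lemma~\ref{lem:dymin} every minor of a \dwred graph is \dwred, and by Proposition~\ref{prop:dsubk} a \dwred graph $G$ with at least one edge admits a first move of a $\mathcal K$-reduction that does not increase the number of edges, with (K5) used only when $G$ is loopless, bridgeless, simple and has no degree-two vertices. In the first four cases we compare $G$ with a smaller \dwred graph $H$ (a minor of $G$). In case (K5) we compare $G$ with the graph $H$ obtained by a \deltawye exchange; $H$ has the same number of edges, so the induction needs more care there.

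The cases (K1)--(K4) are straightforward polynomial identities.

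\begin{itemize}
\item A bridge or loop $e$ gives $R(G;z)=(1+z)R(G/e;z)$ or $R(G;z)=(1+z)R(G\setminus e;z)$ by Lemma~\ref{lem:nullchanges}. Hence $f_k(G)=f_k(H)+f_{k-1}(H)$, with $b(G)=b(H)$, and divisibility follows.
\item For a parallel pair $e,f$ with $H=G\setminus\{e,f\}$, split the subsets by their intersection with $\{e,f\}$. This gives $R(G)=R(G\setminus e\setminus f)+2zR'+z^2R''$-type expressions. More precisely, $R(G;z)=(1+z^2)R(G\setminus e/f)+2R(G\setminus\{e,f\})$-style formulas, with the exact form depending on whether $e$ is a bridge of $G\setminus f$. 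Theorem~\ref{thm:bpreserved} gives $b(G)=b(H)$ or $b(H)+$ a component correction.
\item Proposition~\ref{prop:bikeminors} controls $b$ of the other minor $G/e\setminus f$, which has fewer edges and is \dwred. From this the required powers of $2$ come out: the term with coefficient $2$ absorbs one lost factor, and the $(1+z^2)$ term shifts indices by at most two while $b$ drops by at most one.
\item (K4) is the dual computation, handled in the same way with series edges.
\end{itemize}

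The main obstacle is (K5). To handle it, I would write both $R(G;z)$ and $R(H;z)$ as sums over the restriction to the common edge set $E'=E-\{e,f,g\}$. For each $A\subseteq E'$, the contribution of the triangle, respectively the star, depends only on how $x,y,z$ are partitioned by the components of $G|A$: all separate, exactly two together, or all together.

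\begin{itemize}
\item Enumerating the $8$ subsets of the triangle and of the star, with the nullity shifts given by Lemma~\ref{lem:nullchanges}, gives for each of the three connection types an explicit polynomial in $z$ multiplying $z^{n(A)}$.
\item I expect the difference of these polynomials to be a multiple of $(1-z)^2$ or of $2(1-z)$ type, say $(z^3-3z+2)\cdot(\text{something})$, or to vanish in some types.
\item Consequently $R(G)-R(H)$ is expressible through $R$ of the minors $G/\{e,f,g\}$ (identifying $x,y,z$) and $G\setminus\{e,f,g\}$, which have fewer edges and are \dwred. It may also involve graphs with $x,y,z$ partially identified, which are minors of $G$ or of $H$.
\end{itemize}

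By induction these minors have the LV property. By Proposition~\ref{prop:bikeminors}, applied up to three times, together with Theorem~\ref{thm:bpreserved} (which gives $b(G)=b(H)$ since $k$ is unchanged), their bicycle dimensions are at least $b(G)-3$. The factors of $2$ and powers of $(z-1)$ in the difference must compensate exactly this loss. I expect this bookkeeping to require the most detailed case analysis, possibly refining the computation by which pairs among $x,y,z$ are bicycle-type related.

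Even then, the induction on $|E|$ does not close in case (K5), since $H$ is not smaller. To fix this I would induct instead on the length of the weakly decreasing reduction of Proposition~\ref{prop:dsubk}, with $|E|$ as primary key and the remaining reduction length as secondary key. Then $H$ is a smaller instance, and the LV property transfers from $H$ to $G$ once $R(G)-R(H)$ is shown to have the appropriate divisibility.
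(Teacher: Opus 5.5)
The overall induction you set up (on $|E|$, then along the exchanges of a weakly decreasing $\mathcal K$-reduction) matches the paper. However, the (K5) step has a genuine gap.

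The computation you anticipate does go through when $G_{\del}$ is connected. Sorting $A\subseteq E-\{e,f,g\}$ by whether $x,y,z$ are joined, split or mixed gives
\[R(G;z)-R(H;z)=(z+2)\bigl(R(G_{\con};z)-R(G_{\del};z)\bigr).\]
This identity together with the LV property of the two minors cannot give $2^{b(G)-k}\mid[z^k](R(G)-R(H))$. Multiplying by $z+2$ gains exactly one factor of $2$ per coefficient, since $[z^k]$ becomes $p_{k-1}+2p_k$. But $b(G_{\del})$ and $b(G_{\con})$ can be as small as $b(G)-2$. This happens when all three triangle edges are of bicycle type; for example, $b(K_4)=2$, while $G_{\del}=K_{1,3}$ and $G_{\con}$ (three parallel edges) have bicycle dimension $0$. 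So your route yields only $2^{b(G)-k-1}$. With your bound $b(G)-3$, obtained from three applications of Proposition~\ref{prop:bikeminors}, you are two factors short. If at most two of $e,f,g$ are of bicycle type, then $b(G_{\del}),b(G_{\con})\geq b(G)-1$ and your argument works. The hard case is exactly the remaining one.

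The missing idea is that the last factor of $2$ must come from the LV property of $H$ itself. You have this by induction but never feed it into the bound on the difference. The paper does the following:
\begin{itemize}
\item It writes $R(G)$, $R(H)$, $R(G_{\del})$, $R(G_{\con})$ and $R(G_e)+R(G_f)+R(G_g)$ in terms of the joined/split/mixed generating functions $C,D,H$. Here $G_p$ is obtained by contracting one triangle edge and deleting the other two.
\item It proves the sharper bounds $b(G_{\del}),b(G_{\con})\geq b(G)-2$ and $b(G_p)\geq b(G)-1$, using that a bicycle meets the triangle in $0$ or $2$ edges.
\item It inducts on the coefficient index $i$. The minors give $2^{d-i-2}$ dividing each of $c_i,d_i,h_i$, and the $G_p$ are needed for $h_i$. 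The congruence $2^{d-i}\mid[z^i]R(H)$ then upgrades this to $2^{d-i-1}\mid c_i-d_i$.
\item That is exactly what makes $R(G)-R(H)=(z+2)(z^2-1)(C-D)$ divisible enough.
\end{itemize}

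Two smaller points.
\begin{itemize}
\item The component-type bookkeeping needs $G_{\del}$ connected, and you do not secure this. The paper reduces to $2$-connected graphs via blocks. It then notes that exchanges preserve $2$-connectivity when there are no degree-$2$ vertices.
\item Your (K3) formula is wrong. The correct one is $R(G)=R(G\setminus\{e,f\})+(z+2)R(G\setminus e/f)$ when $\{e,f\}$ is not a cut, and $2(z+1)R(G\setminus\{e,f\})$ otherwise. With a $(1+z^2)$ factor, the unshifted coefficient-$1$ term from a minor whose bicycle dimension may drop by one would lose a factor of $2$. It is the $z+2$ that makes this case work.
\end{itemize}
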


By Proposition~(\ref{prop:dsubk}) any \dwred graph has a $\mathcal K$-reduction where the number of edges of each graph in the sequence is weakly decreasing. Our proof proceeds by showing that a graph in such a $\mathcal K$-reduction has the LV property if and only if the next graph in the sequence does too.  Not surprisingly, the bulk of the work is required when consecutive graphs in a $\mathcal K$-reduction are related by a \deltawye exchange.  

Suppose that $G$ is a graph with 
edges $e$, $f$ and $g$ forming a triangle.  
Let $G_{\del}$ denote the graph obtained from $G$ by deleting $e$, $f$ and $g$. 
Let $H$ denote the graph formed by performing a \deltawye exchange on the triangle $efg$. Let $G_{\con}$ denote the graph formed from $H$ by contracting $e$, $f$ and $g$ and for $p\in \{e,f,g\}$, let $G_p$ denote the graph obtained from $G$ by contracting $p$ and deleting the two edges in $\{e,f,g\}-\{p\}$. 

A key step in our proof is to assume that each of $G_{\del}$, $G_{\con}$, $G_e$, $G_f$ and $G_g$ has the LV property, and show that providing $G_{\del}$ is connected,
this implies that $G$ has the LV property if and only if $H$ does.

To do this we need a lower bound on the bicycle dimension of $G_{\del}$, $G_{\con}$, $G_e$, $G_f$ and $G_g$ in terms of $b(G)$.  
\begin{lemma}\label{lem:bikelower}
With the notation as above, we have $b(H)=b(G)$, $b(G_{\del})\geq b(G)-2$, $b(G_{\con})\geq b(G)-2$ and for every $p$ in $\{e,f,g\}$, $b(G_p)\geq b(G)-1$.
\end{lemma}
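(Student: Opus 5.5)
Most of the work for $b(H)=b(G)$ is already in the proof of Lemma~\ref{lem:evenbike}, and the remaining bounds follow from Proposition~\ref{prop:bikeminors} once each graph is written as a short sequence of single deletions and contractions starting from $G$ or $H$.

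\textbf{The equality $b(H)=b(G)$.} The last part of the proof of Lemma~\ref{lem:evenbike} gives an explicit map from $\mathcal B(G)$ to $\mathcal B(H)$:
\begin{itemize}[label={}]
\item a bicycle $A$ avoiding $\{e,f,g\}$ maps to itself;
\item a bicycle $B$ with $B\cap\{e,f,g\}=\{p,q\}$ maps to $(B-\{p,q\})\cup\{p',q'\}$.
\end{itemize}
I would check that this map is linear. It preserves symmetric differences, since $\{e,f\}\symdiff\{f,g\}=\{e,g\}$ corresponds to $\{e',f'\}\symdiff\{f',g'\}=\{e',g'\}$. The reverse construction in the same proof gives its inverse, so the map is a bijection. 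Hence the two bicycle spaces are isomorphic and $b(H)=b(G)$.

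\textbf{The bound for $G_p$.} By definition, $G_p$ is obtained from $G$ by contracting $p$ and deleting the other two edges of the triangle. After $p$ is contracted, those two edges become parallel with each other. I would handle the parallel pair as follows.
\begin{enumerate}
\item Let $q,r$ be the two parallel edges in $G/p$. Then $G_p=(G/p)\setminus\{q,r\}$, and deleting both edges of a parallel pair is move (K3).
\item By the parallel-edge case in the proof of Lemma~\ref{lem:evenbike}, every bicycle contains both or neither of $q$ and $r$. Moreover $B\mapsto B-\{q,r\}$ maps $\mathcal B(G/p)$ onto $\mathcal B((G/p)\setminus\{q,r\})$.
\item Its kernel is $\mathcal B(G/p)\cap\{\emptyset,\{q,r\}\}$, which has dimension at most one. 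So $b(G_p)\geq b(G/p)-1$.
\end{enumerate}
This alone would give only $b(G)-2$. To get $b(G)-1$ I would compare directly with $G\setminus\{q,r\}$ instead of going through $G/p$.
\begin{enumerate}
\item In $G\setminus\{q,r\}$, suppose $p$ is not a loop or a bridge. Then contracting $p$ loses at most one dimension. Also, deleting $q$ then $r$ from $G$ might lose two dimensions in total.
\item To avoid this, I would show that $\mathcal B(G_p)$ contains the image of the subspace $\{B\in\mathcal B(G): q\in B \Leftrightarrow r\in B\}$. This subspace has codimension at most one in $\mathcal B(G)$.
\item Such a $B$ meets the triangle in $\emptyset$ or $\{q,r\}$, since $|B\cap\{e,f,g\}|$ is even.
\item Removing $\{q,r\}$ from $B$ then gives a cycle and cocycle of $G_p$. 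For the cocycle part: $x$ and $y$, the ends of $p$, lie on the same side of the cut because $p\notin B$, so they can be identified. For the cycle part: the degree of the merged vertex changes by $2$ or $0$.
\item The map $B\mapsto B-\{q,r\}$ is injective on this subspace. If $B-\{q,r\}=\emptyset$, then $B=\{q,r\}$, which is a bicycle of $G$ only if $q,r$ form a cycle. They do not, since a triangle has distinct vertices.
\end{enumerate}
This gives $b(G_p)\geq b(G)-1$.

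\textbf{The bounds for $G_{\del}$ and $G_{\con}$.} The same restriction idea handles both.
\begin{itemize}[label={}]
\item For $G_{\del}$: the bicycles of $G$ avoiding $\{e,f,g\}$ form a subspace of codimension at most $2$. This is the kernel of $B\mapsto B\cap\{e,f,g\}$, whose image lies in the two-dimensional space of even subsets of the triangle. By the proof of Lemma~\ref{lem:evenbike}, these bicycles are exactly bicycles of $G_{\del}$, so $b(G_{\del})\geq b(G)-2$.
\item For $G_{\con}$: I would work in $H$, which has $b(H)=b(G)$. Take the bicycles of $H$ avoiding $\{e',f',g'\}$, again of codimension at most $2$.
\item Contracting non-loop edges that lie in none of the chosen bicycles preserves both the cycle and the cocycle conditions. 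The ends of each such edge lie on the same side of the cut, and vertex degrees add. Injectivity is automatic, since the edge sets are unchanged. So $b(G_{\con})\geq b(G)-2$.
\end{itemize}

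\textbf{Main obstacle.} The delicate step is the $G_p$ bound, where a naive application of Proposition~\ref{prop:bikeminors} loses one dimension too many. The restriction-to-a-subspace argument above is how I would resolve it. I would also verify the degenerate cases, for instance $p$ being parallel to another edge, although contraction of the non-loop edge $p$ is always fine.
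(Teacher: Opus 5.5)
Your proof is correct, and its core matches the paper's. Bicycles of $G$ that meet the triangle in a controlled way pass injectively to bicycles of each minor, and the bicycles avoiding the triangle form a subspace of codimension at most $2$.

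The difference lies in how this is organized.

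\textbf{The equality $b(H)=b(G)$.} The paper simply cites Theorem~\ref{thm:bpreserved}. You instead observe that the correspondence in the proof of Lemma~\ref{lem:evenbike} is just the relabelling $p\mapsto p'$ restricted to $\mathcal B(G)$. It is therefore a linear bijection onto $\mathcal B(H)$, which makes the lemma self-contained.

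\textbf{The inequalities.} The paper splits into cases according to how many of $e,f,g$ have bicycle type. It adjusts a basis of $\mathcal B(G)$ so that all but at most two members avoid the triangle, and then exhibits extra bicycles such as $B_{e,f}-\{e,f\}\in\mathcal B(G_g)$. You instead take kernels of the linear maps $B\mapsto B\cap\{e,f,g\}$ and $B\mapsto |B\cap\{q,r\}| \bmod 2$. This removes the case split. It also makes explicit the injectivity of $B\mapsto B-\{q,r\}$ (because $\{q,r\}$ is not a cycle), which the paper leaves tacit. In exchange, the paper's case analysis produces concrete bases showing which bicycles survive in each minor. Your treatment of $G_{\con}$ via $H$ rather than via $G$ is equivalent, since the relabelling fixes triangle-avoiding bicycles.

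\textbf{Minor points.}
\begin{enumerate}
\item Your opening sentence promises a derivation from Proposition~\ref{prop:bikeminors}, but the argument you actually give bypasses it. Reword it accordingly.
\item For $G_{\del}$ you need only the inclusion of triangle-avoiding bicycles of $G$ in $\mathcal B(G_{\del})$; equality can fail. This inclusion is a direct check. It is not contained in the proof of Lemma~\ref{lem:evenbike}, which compares $G$ with $H$.
\item In the cycle check for $G_p$, also record that the common end of $q$ and $r$ loses degree $2$ when $q,r\in B$.
\end{enumerate}
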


\begin{proof}
The fact that $b(H)=b(G)$ follows from Theorem~\ref{thm:bpreserved}. Recall that a bicycle intersects a cycle in an even number of edges. Thus if $B$ is a bicycle in $G$, then $|B\cap\{e,f,g\}|\in \{0,2\}$. So it is not possible to have precisely one of $\{e,f,g\}$ having bicycle type.

When no edge in $\{e,f,g\}$ has bicycle type in $G$, a set $B$ is a bicycle in $G$ if and only if it is a bicycle in each of $G_{\con}$, $G_{\del}$, $G_e$, $G_f$ and $G_g$, so the conclusion holds in this case.

Suppose that two edges in $\{e,f,g\}$ have bicycle type and the other does not. Without loss of generality, we may assume that $e$ and $f$ have bicycle type in $G$, but $g$ does not. 
As every bicycle has even sized intersection with $\{e,f,g\}$, there is a bicycle $B$ of $G$ containing both $e$ and $f$. 
Let $\mathcal Z$ be a basis for $\mathcal B(G)$ containing $B$. Then every element of $\mathcal Z$ has even intersection with $\{e,f\}$, so by replacing each element $B'$ of $\mathcal Z-\{B\}$ by $B\symdiff B'$, if necessary, we see that there is a basis $\{B,B_1,\ldots,B_t\}$ for $\mathcal B(G)$ in which for $i=1,\ldots,t$, $B_i \cap \{e,f,g\}=\emptyset$. 
Then for each $i$, $B_i$ is a bicycle in each of $G_{\con}$, $G_{\del}$, $G_e$, $G_f$ and $G_g$, so we have $b(G_{\con})\geq b(G)-1$, $b(G_{\del})\geq b(G)-1$ and $b(G_p)\geq b(G)-1$ for each $p$ in $\{e,f,g\}$, and the conclusion holds in this case.

Finally, suppose that every edge in $\{e,f,g\}$ has bicycle type. Then, without loss of generality, we may assume that there are bicycles $B_{e,f}$ and $B_{e,g}$ in $G$ with $B_{e,f}\cap\{e,f,g\}=\{e,f\}$ and $B_{e,g}\cap \{e,f,g\}=\{e,g\}$. Let $\mathcal Z$ be a basis for $\mathcal B(G)$ containing $B_{e,f}$ and $B_{e,g}$. Then every element of $\mathcal Z$ has even intersection with $\{e,f,g\}$, so by replacing each element $B$ of $\mathcal Z-\{B_{e,f},B_{e,g}\}$ by $B \symdiff B_{e,f}$, 
$B\symdiff B_{e,g}$, or
$B\symdiff B_{e,f} \symdiff B_{e,g}$, 
if necessary, we see that there is a basis $\{B_{e,f},B_{e,g},B_1,\ldots,B_t\}$ for $\mathcal B(G)$ in which for $i=1,\ldots,t$, $B_i \cap \{e,f,g\}=\emptyset$. 
Then for each $i$, $B_i$ is a bicycle in $G_{\con}$, $G_{\del}$, 
$G_e$, $G_f$ and $G_g$. This gives $b(G_{\con})\geq b(G)-2$ and $b(G_{\del})\geq b(G)-2$. 
Furthermore, 
$B_{e,f}-\{e,f\} \in \mathcal B(G_g)$, $B_{e,g}-\{e,g\} \in \mathcal B(G_f)$ and $(B_{e,f} \symdiff B_{e,g})-\{f,g\} \in \mathcal B(G_e)$, so we see that $b(G_p)\geq b(G)-1$ for each $p$ in $\{e,f,g\}$. This completes the proof.
\end{proof}


Given a subset $A$ of $E(G_{\del})$, the \emph{component type} of $A$ describes the partition of $\{x,y,z\}$ induced by the connected components of $G|A$. We say that $x,y,z$ are \emph{joined by $A$} if they all lie in the same connected component of $G|A$ and are \emph{split by $A$} if they each lie in pairwise distinct connected components of $G|A$. Otherwise we say that $x,y,z$ are \emph{mixed by $A$}.

For $i\geq 0$, we now define 
\begin{align*}
c_i &= |\{A \subseteq E(G_{\del}): n_G(A)=i,\  \text{$x,y,z
$ are joined by $A$}\}|.\\
d_i &= |\{A \subseteq E(G_{\del}): n_G(A)=i+2,\ \text{$x,y,z
$ are split by $A$}\}|.\\
h_i &= |\{A \subseteq E(G_{\del}): n_G(A)=i+1,\  \text{$x,y,z
$ are mixed by $A$}\}|.
\end{align*}
So a set $A$ in which the vertices $y$ and $z$ lie in the same connected component of $G|A$, the vertex $x$ is in a different connected component and 
$n_G(A)=i+1$ contributes to $h_i$. 


It is convenient to set $c_i=d_i=h_i=0$ for $i<0$. We let 
\[ C(z)=\sum_{i=0}^{\infty}  c_i z^i,\
D(z)=\sum_{i=0}^{\infty}  d_i z^i,\
H(z)=\sum_{i=0}^{\infty}  h_i z^i.\]

The next proposition compares the nullities of sets in each of
$G$, $H$, $G_{\del}$, $G_{\con}$, $G_e$, $G_f$ and $G_g$; this will allow us to determine how the coefficients of 
$R(G;z)$
 change after a \deltawye exchange.

\begin{proposition}\label{prop:lotsofnullity}
Let $G$, $H$, $G_{\del}$, $G_{\con}$, $G_e$, $G_f$ and $G_g$ be as described above and suppose that $G_{\del}$ is connected. Let $A\subseteq E(G_{\del})$, $X\subseteq \{e,f,g\}$ and $Y\subseteq \{e',f',g'\}$. Then each of the following hold.
\begin{enumerate}
\item
If $x$, $y$, $z$ are joined by $A$, then 
\begin{align*}
n_G(A\cup X) &= n_G(A) + |X|,\\
n_H(A\cup Y) &= \begin{cases} 
n_G(A) & \text{if $|Y|=1$,}\\
n_G(A) + 1 & \text{if $|Y|\in\{0,2\}$,} \\
n_G(A) + 2 &\text{if $|Y|=3$,}
\end{cases}
\end{align*}
\begin{gather*} n_{G_{\del}}(A)= n_G(A),\
n_{G_{\con}}(A)= n_G(A)+2,\\
n_{G_e}(A)=n_{G_f}(A)=n_{G_g}(A)=n_G(A)+1.\end{gather*}

\item
If $x$, $y$, $z$ are split by $A$, then 
\begin{align*}
n_G(A\cup X) &= \begin{cases} 
n_G(A) & \text{if $|X|=0$,}\\
n_G(A) - 1 & \text{if $|X|\in\{1,3\},$} \\
n_G(A) - 2 &\text{if $|X|=2$,}
\end{cases}\\
n_H(A\cup Y) &= n_G(A)+1 -|Y|,
\end{align*}
\begin{gather*} n_{G_{\del}}(A)= n_G(A),\
n_{G_{\con}}(A)= n_G(A)-2,\\
n_{G_e}(A)=n_{G_f}(A)=n_{G_g}(A)=n_G(A)-1.\end{gather*}
\item 
If $y$ and $z$ are in the same connected component of $G|A$, but $x$ is in a different connected component, then
\begin{align*}
n_G(A\cup X) &= \begin{cases} 
n_G(A)-1 & \text{if $X=\{f\}$ or $X=\{g\}$,}\\
n_G(A) + 1 & \text{if $X=\{e\}$ or $X=\{e,f,g\}$,} \\
n_G(A) &\text{otherwise,}
\end{cases}\\
n_H(A\cup Y) &= \begin{cases} 
n_G(A)-1 & \text{if $Y=\{e',g'\}$ or $Y=\{e',f'\}$,}\\
n_G(A) + 1 & \text{if $Y=\{f',g'\}$ or $Y=\emptyset$,} \\
n_G(A) &\text{otherwise,}
\end{cases}
\end{align*}
\begin{gather*} n_{G_{\del}}(A)= n_G(A),\
n_{G_{\con}}(A)= n_G(A),\\
n_{G_e}(A)=n_G(A)+1,\ 
n_{G_f}(A)=n_{G_g}(A)=n_G(A)-1.\end{gather*}
Analogous results hold if the roles of $x$, $y$ and $z$ are interchanged.
\end{enumerate}
\end{proposition}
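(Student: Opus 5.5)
The plan is to reduce every claimed identity to the rank formula $n_K(S)=r(K)+|S|-2r_K(S)$ for a graph $K$ and $S\subseteq E(K)$, together with Lemma~\ref{lem:nullchanges}. First I record the global ranks. Since $G_{\del}$ is connected and spans all vertices of $G$, we have $r(G)=r(G_{\del})$. The graph $H$ has one extra vertex $w$ joined to $G_{\del}$, so $r(H)=r(G)+1$. Each $G_p$ is $G_{\del}$ with one edge between two of $x,y,z$ contracted, so $r(G_p)=r(G)-1$. Finally $G_{\con}$ identifies $x,y,z$ (and $w$), so $r(G_{\con})=r(G)-2$. It follows that for $A\subseteq E(G_{\del})$ we have $n_{G_{\del}}(A)=n_G(A)$ in all cases, because $r_{G_{\del}}(A)=r_G(A)$.

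Next I handle the contracted graphs. For $K$ obtained from $G_{\del}$ by identifying some subset of $\{x,y,z\}$, the rank $r_K(A)$ equals $r_G(A)$ minus the drop in the number of components of $G|A$ caused by the identification. This drop is determined entirely by the component type of $A$.
\begin{itemize}
\item When $x,y,z$ are joined by $A$, the drop is $0$.
\item When they are split by $A$, identifying all three reduces the component count by $2$, and identifying one pair (as in $G_p$) reduces it by $1$.
\item In the mixed case with $y,z$ together and $x$ apart, identifying all three gives a drop of $1$. Contracting $e=yz$ gives a drop of $0$. Contracting $f$ or $g$ (each containing $x$) gives a drop of $1$.
\end{itemize}
Substituting these drops and the global ranks into the formula $n=r(K)+|A|-2r_K(A)$ yields the stated values for $G_{\con}$ and $G_e,G_f,G_g$. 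For example, in the split case for $G_{\con}$ we get $(r-2)+|A|-2(r_G(A)+2)=n_G(A)-2$, and in the mixed case for $G_e$ we get $(r-1)+|A|-2r_G(A)=n_G(A)+1$.

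For $n_G(A\cup X)$ I use $n_G(A\cup X)=n_G(A)+|X|-2\big(r(A\cup X)-r(A)\big)$. The quantity $r(A\cup X)-r(A)$ is the number of component mergings among $x,y,z$ produced by the edges in $X$. It is $0$ when the vertices are joined. When they are split it is $\min(|X|,2)$, since $X$ lies on a triangle. In the mixed case it is $1$ exactly when $X$ contains $f$ or $g$, and $0$ otherwise. Tabulating these values gives each case of the statement.

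For $H$, I write $n_H(A\cup Y)=r(G)+1+|A|+|Y|-2r_H(A\cup Y)$. Here $r_H(A\cup Y)=r_G(A)+m$, where $m$ is the number of components of $H|(A\cup Y)$ that are lost relative to $G|A$ together with the isolated vertex $w$. If $Y=\emptyset$ then $m=0$. Otherwise the edges in $Y$ attach $w$ to the components containing the corresponding endpoints, so $m$ equals the number of distinct components of $G|A$ met by the endpoints of $Y$. In the joined case this gives $m=\min(|Y|,1)$. In the split case $m=|Y|$. In the mixed case $m$ is the number of distinct classes among $\{x\}$ and $\{y,z\}$ met by $Y$.

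The main point needing care is the edge labelling: $e'=wx$, $f'=wy$, $g'=wz$, while $e=yz$, $f=xz$, $g=xy$. With this labelling, in the mixed case $\{e',g'\}$ and $\{e',f'\}$ meet both classes, so $m=2$ and the value is $n_G(A)-1$. The sets $\{f',g'\}$ and the singletons meet one class, so $m=1$, giving $n_G(A)+1$ and $n_G(A)$ respectively. The full set gives $m=2$ and the value $n_G(A)$. All of these match the statement.

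Beyond this labelling check, the whole argument is bookkeeping. The analogous results for the other mixed types follow by the symmetry permuting $x,y,z$ together with the corresponding edge labels.
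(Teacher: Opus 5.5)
\textbf{The contraction step is wrong.} Your computations of $n_G(A\cup X)$, $n_H(A\cup Y)$ and $n_{G_{\del}}(A)$ are correct; they are the paper's edge-by-edge use of Lemma~\ref{lem:nullchanges} written in rank form. The treatment of $G_{\con}$ and $G_p$, however, rests on a false rank formula.

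If $K$ is obtained from $G_{\del}$ by identifying some of $x,y,z$, and $\delta$ is the resulting drop in the number of components of $G|A$, then $r_K(A)$ is not $r_G(A)-\delta$. That formula forgets that $K$ also has fewer vertices. Since $G$ and $K$ are connected, $|V(G)|-|V(K)|=r(G)-r(K)$, so
\[ r_K(A)=|V(K)|-k(K|A)=r_G(A)-\bigl(r(G)-r(K)\bigr)+\delta. \]
For instance, in the split case with $A=\emptyset$ your formula gives $r_{G_{\con}}(\emptyset)=-2$.

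Substituting your formula into $n_K(A)=r(K)+|A|-2r_K(A)$ gives $n_K(A)=n_G(A)-(r(G)-r(K))+2\delta$. This is exactly the negative of the correct shift in every case. It would give $n_{G_{\con}}(A)=n_G(A)-2$ when $x,y,z$ are joined and $n_G(A)+2$ when they are split.

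Your two worked examples do not follow from your own formula, and both contain arithmetic slips:
\[ (r-2)+|A|-2(r_G(A)+2)=n_G(A)-6, \qquad (r-1)+|A|-2r_G(A)=n_G(A)-1. \]
So the claimed agreement with the statement for $G_{\con}$, $G_e$, $G_f$, $G_g$ is not actually established.

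\textbf{How to fix it.} With the correct rank you get $n_K(A)=n_G(A)+(r(G)-r(K))-2\delta$. Your table of drops $\delta$ is correct, and it now gives exactly the stated values:
\begin{itemize}
\item joined case: $+2$ for $G_{\con}$ and $+1$ for each $G_p$;
\item split case: $2-4=-2$ for $G_{\con}$ and $1-2=-1$ for each $G_p$;
\item mixed case: $2-2=0$ for $G_{\con}$, $1-0=+1$ for $G_e$, and $1-2=-1$ for $G_f,G_g$.
\end{itemize}

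Alternatively, follow the paper and avoid a separate computation altogether. By Lemma~\ref{lem:nullchanges}, contracting a non-loop $p$ gives $n_{G/p}(A)=n_G(A\cup\{p\})$. Deleting the other two triangle edges does not change nullity, since they are not bridges ($G_{\del}$ is connected). Hence $n_{G_p}(A)=n_G(A\cup\{p\})$. Similarly, contracting $e',f',g'$ in turn, none of which is a loop at that moment, gives $n_{G_{\con}}(A)=n_H(A\cup\{e',f',g'\})$. Both are then read off from the $X$ and $Y$ tables you already verified.
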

\begin{proof}
The result follows from several applications of Lemma~\ref{lem:nullchanges}.
As $G_{\del}$ is assumed to be connected, each of $G$, $H$, $G_{\con}$, $G_e$, $G_f$ and $G_g$ is connected.

All the results for $n_G(A\cup X)$ are obtained by using Lemma~\ref{lem:nullchanges}, starting with $G|A$
and
adding the edges of $X$ one edge at a time, noting whether the ends of the edges added are in the same component just before they are added. 

Next we have $n_H(A)=n_G(A)+1$, because $r(H)=r(G)+1$. The other results for $n_H(A\cup Y)$ are then obtained in a similar way to those for $n_G(A\cup X)$. 

Then $G_{\del}$ may be obtained from $G$ by deleting each of $e$, $f$ and $g$, none of which is a bridge at the point when it is deleted. Thus $n_{G_{\del}}(A)=n_{G}(A)$.
Similarly,
$G_{\con}$ may be obtained from $H$ by contracting each of $e'$, $f'$ and $g'$, none of which is a loop at the point when it is contracted. Thus $n_{G_{\con}}(A)=n_H(A\cup \{e',f',g'\})$.

Finally, for $p$ in $\{e,f,g\}$, we have $n_{G_p}(A) = n_G(A\cup \{e\})$. 
\end{proof}

Using this result, we can now write down expressions for $R(G)$, $R(H)$, $R(G_{\del})$, $R(G_{\con})$ and $R(G_e)+R(G_f)+R(G_g)$ in terms of $C$, $D$ and $H$.
\begin{proposition}
Let $G$, $H$, $G_{\del}$, $G_{\con}$, $G_e$, $G_f$ and $G_g$ be as described above and suppose that $G_{\del}$ is connected. Then
\begin{align*}
R(G;z) &= (z+1)^3 C(z)+(z^2+4z+3) D(z) 
 +2(z+1)^2 H(z).\\
R(H;z) &= (z^2+4z+3) C(z) 
 +(z+1)^3 D(z) 
 +2(z+1)^2 H(z).\\
R(G_{\del};z) &= C(z)+ z^2 D(z)  +zH(z).\\
R(G_{\con};z) &= z^2C(z) + D(z) +zH(z).
\end{align*}
and 
\[
R(G_e;z)+R(G_f;z)+R(G_g;z) = 3z(C(z)+D(z)) + (z^2+2)H(z).
\]
Furthermore,
\begin{gather*}
[z^i]R(G) = c_{i-3} + 3c_{i-2} + 3c_{i-1} + c_i  + d_{i-2} + 4d_{i-1} + 3d_i + 2h_{i-2} + 4h_{i-1} +2h_i.\\
[z^i]R(H) = c_{i-2} + 4c_{i-1} + 3c_i + d_{i-3} + 3d_{i-2} + 3d_{i-1} + d_i + 2h_{i-2} + 4h_{i-1} +2h_i.\\
[z^i](R(H)-R(G)) = (d_{i-3}-c_{i-3})+2(d_{i-2}-c_{i-2})-(d_{i-1}-c_{i-1})-2(d_i-c_i).\\
[z^i]R(G_{\del}) = c_i + d_{i-2} + h_{i-1}.\\
[z^i]R(G_{\con}) = c_{i-2} + d_{i} + h_{i-1}.\\
[z^i](R(G_e)+R(G_f)+R(G_g)) = 3c_{i-1} + 3d_{i-1} + h_{i-2} + 2h_i.
\end{gather*}
    \end{proposition}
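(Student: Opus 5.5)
The plan is to compute each of the five generating functions by grouping subsets of edges according to their intersection with $E(G_{\del})$. Every subset of $E(G)$ is uniquely $A\cup X$ with $A\subseteq E(G_{\del})$ and $X\subseteq\{e,f,g\}$. Likewise every subset of $E(H)$ is uniquely $A\cup Y$ with $Y\subseteq\{e',f',g'\}$. Each of $G_{\del}$, $G_{\con}$, $G_e$, $G_f$, $G_g$ has edge set exactly $E(G_{\del})$.

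So for each graph I write
\[R(\cdot\,;z)=\sum_{A\subseteq E(G_{\del})}\ \sum_{X}z^{n(A\cup X)},\]
where the inner sum is over the relevant $X$ or $Y$, or is a single term for the three minors. I then split the outer sum according to whether $x,y,z$ are joined, split or mixed by $A$. For each type, Proposition~\ref{prop:lotsofnullity} gives the inner sum as an explicit polynomial times $z^{n_G(A)}$. The shifts in the definitions of $c_i,d_i,h_i$ are chosen so that a set counted by $c_i$, $d_i$ or $h_i$ contributes that polynomial times $z^i$.

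I would carry out the inner sums type by type.

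\emph{Joined, with $n_G(A)=i$.}
\begin{itemize}
\item In $G$ the inner sum is $\sum_X z^{i+|X|}=z^i(1+z)^3$.
\item In $H$ it is $z^{i+1}+3z^i+3z^{i+1}+z^{i+2}=z^i(z^2+4z+3)$.
\item The minors contribute $z^i$ to $G_{\del}$, $z^{i+2}$ to $G_{\con}$, and $3z^{i+1}$ to the sum over $G_e,G_f,G_g$.
\end{itemize}

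\emph{Split, with $n_G(A)=i+2$.}
\begin{itemize}
\item In $G$, grouping $X$ by size $0,1,2,3$ gives $z^{i+2}+3z^{i+1}+3z^i+z^{i+1}=z^i(z^2+4z+3)$.
\item In $H$ it is $\sum_Y z^{i+3-|Y|}=z^i(1+z)^3$.
\item The minors contribute $z^{i+2}$, $z^i$ and $3z^{i+1}$ respectively.
\end{itemize}

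\emph{Mixed, with $n_G(A)=i+1$.}
\begin{itemize}
\item In $G$ the inner sum has two terms $z^i$, two terms $z^{i+2}$ and four terms $z^{i+1}$, giving $2z^i(1+z)^2$.
\item The same count holds in $H$.
\item $G_{\del}$ and $G_{\con}$ each contribute $z^{i+1}$.
\item One of the three contractions raises the nullity and two lower it, so the sum over $G_e,G_f,G_g$ contributes $z^{i+2}+2z^i$.
\end{itemize}
The mixed case needs the symmetric versions of part~3 of Proposition~\ref{prop:lotsofnullity} for the three possible mixed partitions. The counts above do not depend on which vertex is isolated, so all three partitions are absorbed into $H(z)$.

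Summing over $i$ then yields the five polynomial identities. The coefficient formulas follow by expanding:
\begin{itemize}
\item $(z+1)^3=1+3z+3z^2+z^3$;
\item $z^2+4z+3$;
\item $2(z+1)^2=2+4z+2z^2$.
\end{itemize}
Reading off $[z^i]$ shifts indices, so $[z^i]\,z^jC(z)=c_{i-j}$. Subtracting the expressions for $R(G)$ and $R(H)$, the $H(z)$ terms cancel, and
\[(z+1)^3-(z^2+4z+3)=z^3+2z^2-z-2.\]
Hence
\[R(H)-R(G)=(z^3+2z^2-z-2)\bigl(D(z)-C(z)\bigr),\]
which gives the stated difference formula.

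There is no real obstacle here; all the graph theory is already contained in Proposition~\ref{prop:lotsofnullity}. The only step needing care is the bookkeeping in the mixed case. There one has to check which four of the eight choices of $X$ (respectively $Y$) leave the nullity unchanged, and confirm that relabelling $x,y,z$ permutes $\{e,f,g\}$ and $\{e',f',g'\}$ compatibly. That compatibility is what makes the count independent of the mixed partition.
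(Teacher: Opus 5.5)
Your proposal is correct and follows the paper's own route. Like the paper, you write each edge subset as $A\cup X$ (or $A\cup Y$) with $A\subseteq E(G_{\del})$, sort the sets $A$ into joined, split and mixed types, evaluate the inner sums using Proposition~\ref{prop:lotsofnullity}, and then extract coefficients. Your explicit inner sums for $H$, $G_{\del}$, $G_{\con}$ and $G_e,G_f,G_g$ (which the paper leaves as ``derived similarly''), and the factorisation $R(H)-R(G)=(z^3+2z^2-z-2)\bigl(D(z)-C(z)\bigr)$, all check out.
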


\begin{proof}
The first set of expressions may be obtained from Proposition~\ref{prop:lotsofnullity}.  
We have
\begin{align*}
R(G;z) &= \sum_{A\subseteq E} z^{n(A)}\\
&= 
\sum_{\substack{A\subseteq E(G_{\del}):\\
\text{$x,y,z$ are}\\ \text{joined by $A$}}} z^{n(A)} 
\sum_{X\subseteq\{e,f,g\}} z^{n(A\cup X)-n(A)}\\
&\phantom{=}{}+\sum_{\substack{A\subseteq E(G_{\del}):\\
\text{$x,y,z$ are}\\ \text{split by $A$}}} z^{n(A)-2}
\sum_{X\subseteq\{e,f,g\}} z^{n(A\cup X)-n(A)+2}\\
&\phantom{=}{}+\sum_{\substack{A\subseteq E(G_{\del}):\\
\text{$x,y,z$ are}\\ \text{mixed by $A$}}}  z^{n(A)-1}
\sum_{X\subseteq\{e,f,g\}} z^{n(A\cup X)-n(A)+1}\\
&= (z+1)^3C(z) + (z^2+4z+3)D(z) +2(z+1)^2H(z).
\end{align*}
The other expressions may be derived similarly. The second set of expressions follow from the first set by extracting coefficients.
\end{proof}

To simplify the exposition in what follows, in contrast with standard practice, we will write $a \divides b$ to mean $b/a \in \mathbb{Z}$ even when $a$ is not necessarily an integer. 
\begin{theorem}\label{thm:hardbit}
Suppose, in the notation given above, each of 
 $G_{\del}$, $G_{\con}$, $G_e$, $G_f$ and $G_g$ have the LV property and that $G_{\del}$ is connected. Then $G$ has the LV property if and only if $H$ does. 
\end{theorem}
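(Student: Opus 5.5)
Throughout, write $b=b(G)$. By Lemma~\ref{lem:bikelower}, $b(H)=b$. For $i\le b$, $G$ has the LV property if and only if $2^{b-i}\divides [z^i]R(G)$, and likewise for $H$. It therefore suffices to prove that
\[
2^{b-i}\divides [z^i]\bigl(R(H)-R(G)\bigr)\quad\text{for all } i\le b,
\]
possibly using the LV property of whichever of $G$, $H$ is assumed, as extra input. Here I use the convention, fixed just before the theorem, that $a\divides b$ means $b/a\in\mathbb Z$; this makes the statements vacuous for negative exponents. Put $e_j=d_j-c_j$. By the coefficient formula from the preceding proposition,
\[
[z^i]\bigl(R(H)-R(G)\bigr)=(e_{i-3}-e_{i-1})+2(e_{i-2}-e_i).
\]

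\textbf{Step 1 (the $G_{\del}$ and $G_{\con}$ hypotheses).} Lemma~\ref{lem:bikelower} gives $b(G_{\del}),b(G_{\con})\ge b-2$. The LV property of these graphs then gives $2^{b-2-j}\divides c_j+d_{j-2}+h_{j-1}$ and $2^{b-2-j}\divides c_{j-2}+d_j+h_{j-1}$. Subtracting these gives
\[
2^{b-2-j}\divides e_j-e_{j-2}.
\]
Applying this with $j=i$ (and multiplying by $2$) and with $j=i-1$ shows that $[z^i](R(H)-R(G))$ is divisible by $2^{b-1-i}$. So exactly one factor of $2$ is missing.

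\textbf{Step 2 (the missing factor).} This is the main obstacle, and I will try to extract it from the remaining hypotheses, working modulo $2^{b-i}$. I would collect the following relations.

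\begin{itemize}
\item The sum of the $G_{\del}$ and $G_{\con}$ relations: $2^{b-2-j}\divides s_j+s_{j-2}+2h_{j-1}$, where $s_j=c_j+d_j$.
\item The $G_p$ relations: since $b(G_p)\ge b-1$, summing over $p\in\{e,f,g\}$ gives $2^{b-1-j}\divides 3s_{j-1}+h_{j-2}+2h_j$. These are one power of $2$ stronger than the $G_{\del}$ and $G_{\con}$ relations, which is why I expect them to supply the extra factor.
\item The assumed LV property of $G$ (respectively $H$): $2^{b-i}\divides[z^i]R(G)$, where by the preceding proposition
\[
[z^i]R(G)=(c+3d)\text{-terms}+2h_{i-2}+4h_{i-1}+2h_i .
\]
\end{itemize}

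Since $e_j\equiv s_j \pmod{2}$ and more precisely $e_j=s_j-2c_j$, I would rewrite $[z^i](R(H)-R(G))$ as $[z^i]R(G)$ plus a combination of the $s$- and $h$-expressions above with even or controlled coefficients. The aim is to show that $[z^i]R(H)-[z^i]R(G)\equiv 0\pmod{2^{b-i}}$ follows once $[z^i]R(G)\equiv 0$, and symmetrically in the other direction. The $h$-terms cancel in the difference, so the obstacle is to express $2(e_{i-2}-e_i)+(e_{i-3}-e_{i-1})$ modulo $2^{b-i}$ through the $G_p$ relation at indices $i-1$ and $i+1$ and the del/con relations. If necessary I would use induction on $i$, starting from $i=0$ where $b_0=\pm2^b$ for both graphs. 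There one has $f_0(H)-f_0(G)=e_{-3}+\dots=-2e_0$, which is controlled by the $G_{\del}$ and $G_{\con}$ information together with $2^b\divides f_0$.

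\textbf{Step 3 (the boundary cases).} The indices with $i\ge b-1$ are trivial, because the required divisor is at most $2$ and parity already follows from Step 1. The small indices are handled using $c_j=d_j=h_j=0$ for $j<0$. Since the final relation is symmetric in $G$ and $H$, the ``if'' and ``only if'' directions follow together.
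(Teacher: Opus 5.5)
Your reduction to showing $2^{b-i}\divides[z^i](R(H)-R(G))$ is correct, and so is your Step~1. There is a genuine gap, however. Step~2, which you rightly identify as the crux, only lists the available relations and states an aim; the missing factor of $2$ is never produced.

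The missing idea is to strengthen the induction so that it controls $c_i$, $d_i$, $h_i$ individually. Prove by induction on $i$ that $2^{b-i-2}$ divides each of $c_i$, $d_i$, $h_i$, and that $2^{b-i-1}\divides c_i-d_i$.

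First, the individual bounds come from the minor hypotheses. In the $G_{\del}$ relation $2^{b-i-2}\divides c_i+d_{i-2}+h_{i-1}$, induction makes $d_{i-2}$ and $h_{i-1}$ divisible by $2^{b-i-2}$, so $2^{b-i-2}\divides c_i$. The $G_{\con}$ relation gives $2^{b-i-2}\divides d_i$ in the same way. In the summed $G_p$ relation $2^{b-i-1}\divides 3c_{i-1}+3d_{i-1}+h_{i-2}+2h_i$, every term except $2h_i$ is already divisible by $2^{b-i-1}$, so $2^{b-i-2}\divides h_i$.

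Only now can the LV hypothesis on $G$ be used effectively. Discard from $[z^i]R(G)$ the terms that induction makes divisible by $2^{b-i}$; this leaves $2^{b-i}\divides 3c_{i-1}+c_i+3d_i+2h_i$. Subtracting $3c_{i-1}+4d_i+2h_i$, which is divisible by $2^{b-i-1}$, gives $2^{b-i-1}\divides c_i-d_i$. If $H$ is the graph assumed to be LV, interchange the roles of the $c_j$ and $d_j$.

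Once $2^{b-j-1}\divides e_j$ holds for every $j$, each of $e_{i-3}$, $e_{i-1}$, $2e_{i-2}$, $2e_i$ is divisible by $2^{b-i}$, and you are done.

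Your Step~1 route cannot supply this on its own. Subtracting the $G_{\del}$ and $G_{\con}$ relations discards a power of $2$: from $2^m\divides X$ and $2^m\divides Y$ you keep only $2^m\divides X-Y$. The resulting congruences $e_j\equiv e_{j-2}\pmod{2^{b-2-j}}$ determine $e_i$ only modulo $2^{b-2-i}$. What you need is the absolute bound $2^{b-1-i}\divides e_i$, and that is where the LV property of $G$ or $H$ enters, at every index and not just at $i=0$.

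Separately, Step~3 is wrong at $i=b-1$. The required divisor there is $2^{b-i}=2$, but Step~1 gives only $2^{b-1-i}=1$. So this index is not trivial and needs the same argument as the others.
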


\begin{proof} 
Let $d=b(G)$ and suppose that either $G$ or $H$ has the LV property.  We claim that for all $i\geq 0$, each of the following holds.
\begin{enumerate}
\item $2^{d-i-2}\divides c_i$;
\item $2^{d-i-2}\divides d_i$;
\item $2^{d-i-2} \divides h_i$;
\item $2^{d-i-1} \divides (c_i-d_i)$;
\item $2^{d-i} \meddivides [z^i](R(H)-R(G))$. 
\end{enumerate}
We prove this by induction on $i$.

\textbf{Base case:}
First we consider what happens when $i=0$. By hypothesis and Lemma~\ref{lem:bikelower} 
we have 
$2^{d-2} \meddivides [z^0]R(G_{\con})$,
$2^{d-2} \meddivides [z^0]R(G_{\del})$ and
$2^{d-1} \meddivides [z^0](R(G_e)+R(G_f)+R(G_g))$. 
Thus the first two items hold when $i=0$. We also see that $2^{d-1} \divides 2h_0$, giving the third item. 
Because either $G$ or $H$ have the LV property, it follows that either 
$2^d \divides (c_0+3d_0+2h_0)$ or 
$2^d \divides (3c_0+d_0+2h_0)$.
Combining this with the fact that $2^{d-1}$ 
divides each of $2h_0$, $2c_0$ and $2d_0$, we see that it also divides $c_0-d_0$, establishing the fourth item. Thus $2^d \divides 2(c_0-d_0)=[z^0](R(H)-R(G))$, which gives the fifth item when $i=0$.

\textbf{Inductive step:}
Now suppose that the inductive hypothesis is true when $i\leq k-1$. As all of $G_{\con}$, $G_{\del}$, $G_e$, $G_f$ and $G_g$ have the LV property, we obtain the following three divisibility relations.
\begin{enumerate} 
\item $2^{d-k-2} \meddivides [z^k]R(G_{\con})$,
\item $2^{d-k-2} \meddivides [z^k]R(G_{\del})$ and
\item $2^{d-k-1} \meddivides [z^k](R(G_e)+R(G_f)+R(G_g))$.
\end{enumerate}
From the first and second of these relations we get
\[ 2^{d-k-2} \divides (c_k + d_{k-2} + h_{k-1}) \text{\quad and\quad } 2^{d-k-2} \divides (c_{k-2} + d_{k} + h_{k-1}) .\]
Using the inductive hypothesis, each of $c_{k-2}$, $d_{k-2}$ and $h_{k-1}$ is divisible by $2^{d-k-2}$, so we deduce that
$2^{d-k-2} \divides c_k$ and $2^{d-k-2} \divides d_k$, establishing the first two parts of the inductive hypothesis when $i=k$. 

From the third relation we get
\[  2^{d-k-1} \divides  (3c_{k-1} + 3d_{k-1} + h_{k-2} + 2h_k).\]
By the inductive hypothesis, each of $c_{k-1}$, $d_{k-1}$ and $h_{k-2}$ is divisible by $2^{d-k-1}$, so $2^{d-k-1} \divides 2h_k$ and $2^{d-k-2} \divides h_k$, establishing the third part of the inductive hypothesis when $i=k$. 

As either $G$ or $H$ has the LV property we have either
\begin{align*}
2^{d-k} &\meddivides (c_{k-3} + 3c_{k-2} + 3c_{k-1} + c_k  + d_{k-2} + 4d_{k-1} + 3d_k + 2h_{k-2} + 4h_{k-1} +2h_k)\\
\shortintertext{or}
2^{d-k} &\meddivides (d_{k-3} + 3d_{k-2} + 3d_{k-1} + d_k  + c_{k-2} + 4c_{k-1} + 3c_k + 2h_{k-2} + 4h_{k-1} +2h_k).
\end{align*}
In the former case, using the inductive hypothesis, each of $c_{k-3}$, $c_{k-2}$, $d_{k-2}$, $4d_{k-1}$, $h_{k-2}$ and $4h_{k-1}$ is divisible by $2^{d-k}$, so we deduce that
\[ 2^{d-k} \divides (3c_{k-1} + c_k  + 3d_k +2h_k).\]
From the inductive hypothesis we know that $2^{d-k-1} \divides c_{k-1}$ and we have established that $2^{d-k-1} \divides 2h_k$ and 
$2^{d-k-1} \divides 2d_k$, so we deduce that $2^{d-k-1}$ divides $(3c_{k-1} + c_k  + 3d_k +2h_k)-(3c_{k-1}+4d_k+2h_k)=c_k-d_k$, establishing 
the fourth part of the inductive hypothesis in this case. In the latter case, the same conclusion may be reached by following the same argument but with the roles of $c_i$'s and the $d_i$'s interchanged. Thus the fourth item in the inductive hypothesis holds when $i=k$.  

Finally, the inductive hypothesis implies that 
$2^{d-k}$ divides each of $(d_{k-3}-c_{k-3})$,
$(d_{k-2}-c_{k-2})$ and $(d_{k-1}-c_{k-1})$, and we have just shown that $2^{d-k-1} \divides (d_k-c_k)$. So we deduce that $2^{d-k} \meddivides [z^k] (R(H)-R(G))$, establishing the final part of the inductive hypothesis when $i=k$. By induction, each of the five parts of the claim hold for all $i\geq 0$. 

Theorem~\ref{thm:bpreserved} gives $b(G)=b(H)$, so the result follows from the fifth part of the claim.
\end{proof} 

Our next result deals with the case where two graphs are related via move (K3) or (K4). Recall that the Tutte polynomial satisfies the following recurrence relation. For every graph $G$ with edge $e$:
\[  T(G;x,y) = \begin{cases} 
xT(G/e;x,y), & \text{if $e$ is a bridge of $G$;}\\
yT(G\setminus e;x,y), & \text{if $e$ is a loop of $G$;}\\
T(G/e;x,y)+T(G\setminus e;x,y), &\text{otherwise.}
\end{cases}
\]
Thus
\begin{equation} R(G;z) = \begin{cases} 
(z+1)R(G/e;z), & \text{if $e$ is a bridge of $G$;}\\
(z+1)R(G\setminus e;z), & \text{if $e$ is a loop of $G$;}\\
R(G/e;z)+R(G\setminus e;z), &\text{otherwise.}
\end{cases}
\label{eq:Tutterec} 
\end{equation}


\begin{proposition}\label{prop:K3orK4}
    Suppose that $H$ is obtained from $G$ by applying either move (K3) or (K4), and that $H$ has the LV property. Then $G$ also has the LV property.
\end{proposition}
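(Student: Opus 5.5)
Start with (K3). Let $e,f$ be parallel edges of $G$ and $H=G\setminus\{e,f\}$. We compute $R(G;z)$ by two applications of the recurrence \eqref{eq:Tutterec}.

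First suppose $e$ is not a bridge of $G$. This holds automatically, since $f$ is parallel to $e$, unless $e$ is a loop; in that case $f$ is also a loop and $R(G)=(z+1)^2R(H)$. Otherwise $R(G)=R(G/e)+R(G\setminus e)$. In $G/e$ the edge $f$ is a loop, so $R(G/e)=(z+1)R(G/e\setminus f)$. In $G\setminus e$ the edge $f$ is either a bridge, giving $R(G\setminus e)=(z+1)R(G\setminus e/f)$, or an ordinary edge, giving $R(G\setminus e)=R(G\setminus e/f)+R(H)$.

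Note that $G/e\setminus f=G\setminus e/f=:G'$. In the bridge case this gives $R(G)=2(z+1)R(G')$; here $H$ is disconnected and $R(H)=R(G')$, since contracting a bridge $f$ of $G\setminus e$ does not change $R$ as a function of the remaining edges, up to the $(z+1)$ factor already accounted for. In the ordinary case
\[R(G)=(z+2)R(G')+R(H).\]
To compare these with $R(H)$, apply the recurrence to $f$ in $H\cup\{f\}=G\setminus e$. This gives $R(G\setminus e)=R(G')+R(H)$, but it does not eliminate $R(G')$.

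It is cleaner to work directly with nullities. For $A\subseteq E(H)$, consider the four sets $A$, $A\cup e$, $A\cup f$, $A\cup\{e,f\}$. By Lemma~\ref{lem:nullchanges}, if the ends of $e$ lie in one component of $G|A$, these have nullities $n,n+1,n+1,n+2$. Otherwise they have nullities $n,n-1,n-1,n$.

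Let $p(z)$ and $q(z)$ count the sets $A$ of the two kinds by $n_G(A)=n_H(A)$; this equality holds when $H$ is connected, and I would treat the disconnected case separately. Then
\[R(H)=p+q,\qquad R(G)=(1+z)^2p+(2z^{-1}+1)q=R(H)+(2z+z^2)p+2z^{-1}q.\]
Also $b(G)\le b(H)+1$, by Proposition~\ref{prop:bikeminors}, or directly by the pairing argument in the proof of Lemma~\ref{lem:evenbike}, which gives $b(G)\in\{b(H),b(H)+1\}$. So we must show that $2^{b(G)-k}$ divides the coefficient $[z^k]$ of $2zp+z^2p+2z^{-1}q$. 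The $2zp$ and $2z^{-1}q$ terms gain a factor $2$ at a shifted index, but the $z^2p$ term has no factor of $2$. So divisibility information on $p$ and $q$ separately is needed, not just on $R(H)$.

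This is the main obstacle: the hypothesis concerns only $H$, and the LV property of $H$ must control the mixed terms. I would try to express $p$ and $q$ through $H$ and its minor $H/\{uv\}$ obtained by identifying the ends $u,v$ of $e$. Sets with $u,v$ joined give $p$, and $q$ counts the rest. I expect the correct bookkeeping to be to set $d=b(G)$ and show, by induction on $k$ exactly in the style of Theorem~\ref{thm:hardbit}, that $2^{d-k-1}\mid[z^k]p$ and $2^{d-k-1}\mid[z^{k}]q$, and that $2^{d-k}$ divides the relevant difference.

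(K4) is the planar dual statement: two edges in series. It is handled identically with the roles of contraction and deletion, and of $z^{n}$ shifts, swapped, using the second clause of Lemma~\ref{lem:nullchanges}. The bridge/loop degenerate cases give factors $(z+1)^2$ or $2(z+1)$ times $R(H)$, with $b$ unchanged or increased by at most $1$. These are checked directly from the binomial expansion of $(z+1)^2$: the factor $2$ appears in the middle coefficient, and for the leading coefficient we need $b(G)=b(H)$ there.
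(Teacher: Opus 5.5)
Your proposal stops at the decisive step. The case where $f$ is a bridge of $G\setminus e$, giving $R(G)=2(z+1)R(H)$, matches the paper's first case. In the main case $k(H)=k(G)$, however, you only forecast an induction on $p$ and $q$, so that case is not proved.

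The missing idea is that $R(G')$, with $G'=G\setminus e/f$, does not need to be eliminated. You already derived the identity the paper's proof rests on, $R(G;z)=R(H;z)+(z+2)R(G';z)$, that is, $f_i(G)=f_i(H)+f_{i-1}(G')+2f_i(G')$. The paper finishes with three facts:
\begin{enumerate}
\item $b(G)=b(H)$, by Theorem~\ref{thm:bpreserved}, since (K3) is a knot move and $k(G)=k(H)$. Your bound $b(G)\le b(H)+1$ is too weak here: if $b(G)=b(H)+1$, the LV property of $H$ would not give $2^{b(G)-i}\mid f_i(H)$.
\item $b(G')=b(G/f\setminus e)=b(G/f)\geq b(G)-1$, by Proposition~\ref{prop:bikeminors} together with the fact that deleting a loop does not change the bicycle space.
\item $G'$ has the LV property.
\end{enumerate}
Together these show that $2^{b(G)-i}$ divides each of $f_i(H)$, $f_{i-1}(G')$ and $2f_i(G')$.

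The third ingredient is the one your plan lacks. You assume that the LV property of $H$ alone must control the mixed terms. But $R(G)-R(H)$ is exactly $(z+2)R(G')$, and the hypothesis on $H$ says nothing about $G'$. The paper's argument tacitly uses that $G'$ has the LV property. This is available where the proposition is applied, in the proof of Theorem~\ref{thm:main}: there $G'$ is a minor, with fewer edges, of a $\Delta Y$-reducible graph, so it is $\Delta Y$-reducible by Lemma~\ref{lem:dymin} and hence LV by the induction.

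Your nullity bookkeeping also contains an error that hid this structure. Take a set $A$ for which the ends of $e$ lie in different components of $G|A$. The set $A\cup\{e,f\}$ has nullity $n=n_G(A)$ again, so the four sets contribute $z^{n}+2z^{n-1}+z^{n}$. Hence $R(G)=(1+z)^2p+(2+2z^{-1})q$, and
\[
R(G)-R(H)=(z^2+2z)p+(1+2z^{-1})q=(z+2)\bigl(zp+z^{-1}q\bigr),
\]
where $zp+z^{-1}q=R(G')$ by Lemma~\ref{lem:nullchanges}. So you do not need divisibility of $p$ and $q$ separately, only of their combination $R(G')$. In any case, your proposed targets $2^{d-k-1}\mid[z^k]q$ would be too weak, since $q_k$ appears in $[z^k](R(G)-R(H))$ with no factor of $2$.
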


\begin{proof}
    Suppose that $H$ is obtained from $G$ by deleting two parallel edges $e$ and $f$. There are two cases to consider. 

    First, suppose that $k(G\setminus \{e,f\})=k(G)+1$. Then by Theorem~\ref{thm:bpreserved}, $b(G)=b(G\setminus \{e,f\})+1$. By applying~\eqref{eq:Tutterec}, we have
    \[ R(G;z) = (2z+2) R(G\setminus \{e,f\};z).\]
Hence, for all $i$, we have $f_i(G)=2f_i(H)+2f_{i-1}(H)$. As $2^{b(H)-i} \divides f_i(H)$ and 
$2^{b(H)-(i-1)} \divides f_{i-1}(H)$, we get
$2^{b(H)+1-i} \divides (2f_i(H)+2f_{i-1}(H))$, so $2^{b(G)-i} \divides f_i(G)$, as required.

    Second, suppose that $k(G\setminus \{e,f\})=k(G)$. Then by Theorem~\ref{thm:bpreserved}, $b(G)=b(G\setminus \{e,f\})$. By applying~\eqref{eq:Tutterec}, we have
\[ R(G;z)= R(G\setminus\{e,f\};z) + (z+2)R(G\setminus e/f).\]
Hence, for all $i$, we have 
\[ f_i(G) = f_i(G\setminus\{e,f\}) + f_{i-1}(G\setminus e/f) +2f_i(G\setminus e /f).\]
By Proposition~\ref{prop:bikeminors}, $b(G/f)\geq b(G)-1$. In the proof of Lemma~\ref{lem:evenbike}, we showed that deleting a loop does not change the bicycle space, so 
$b(G\setminus e/f)=b(G/f\setminus e)= b(G/f)\geq 
b(G)-1$. Thus $2^{b(G)}$ divides both $f_i(H)=f_i(G\setminus\{e,f\})$ and $f_{i-1}(G\setminus e/f)$ while $2^{b(G)-1}$ divides $f_i(G\setminus e /f)$ from which we deduce that $2^{b(G)} \divides f_i(G)$.

    The proof of the case where $H$ may be obtained from $G$ by applying move (K4) is almost identical to the second part of the proof for move (K3), just interchanging the roles of contraction and deletion. 
\end{proof}

We can now prove our main result.
\begin{theorem}
    Every \dwred graph has the LV property.
\end{theorem}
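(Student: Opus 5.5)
We argue by induction on the number of edges, proving the statement for all graphs in $\mathcal D$ simultaneously. Since $\mathcal D$ is minor-closed (Lemma~\ref{lem:dymin}), every minor of a \dwred graph is again \dwred. An edgeless graph has $R(G;z)=1$ and $b(G)=0$, so it trivially has the LV property. Let $G\in\mathcal D$ have at least one edge. By Proposition~\ref{prop:dsubk}, $G$ has a $\mathcal K$-reduction $G=G_0,G_1,\ldots,G_t$ in which the number of edges is weakly decreasing, and in which (K5) or its inverse is only applied to loopless, bridgeless graphs with no parallel edges and no vertices of degree two. We show by backward induction along this sequence that each $G_j$ has the LV property. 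We use the outer induction on edge number whenever a minor with strictly fewer edges appears.

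Consider the first moves, which do not change $G_j$'s status. Suppose $G_{j+1}$ arises from $G_j$ by (K1) or (K2) with edge $e$. Then $R(G_j;z)=(z+1)R(G_{j+1};z)$ by \eqref{eq:Tutterec}, and $b(G_j)=b(G_{j+1})$ by the proof of Lemma~\ref{lem:evenbike}. Hence $f_i(G_j)=f_i(G_{j+1})+f_{i-1}(G_{j+1})$, and since $2^{b-i+1}\divides f_{i-1}$, the LV property transfers. Since edge numbers weakly decrease, an inverse of (K1)--(K4) cannot occur unless it adds no edges, so it never occurs. If $G_{j+1}$ arises from $G_j$ by (K3) or (K4), then Proposition~\ref{prop:K3orK4} transfers the property from $G_{j+1}$ to $G_j$.

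The remaining case is a \deltawye\ exchange or its inverse between $G_j$ and $G_{j+1}$; this is where Theorem~\ref{thm:hardbit} is needed. Call the graph containing the triangle $G$ and the one containing the wye $H$. Both have the same number of edges, and one of them, $G_{j+1}$, has the LV property by the backward induction. The graphs $G_{\del}$, $G_{\con}$, $G_e$, $G_f$, $G_g$ are minors of $G$ or $H$: $G_{\con}=H/\{e',f',g'\}$ and $G_p=G/p\setminus(\{e,f,g\}-p)$. Each has strictly fewer edges than $G_j$, and $G,H\in\mathcal D$ since \deltawye\ exchange preserves $\mathcal D$. So each of these minors is \dwred, and by the outer induction on edge count each has the LV property. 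Theorem~\ref{thm:hardbit} then gives that $G$ has the LV property if and only if $H$ does, provided $G_{\del}$ is connected.

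The main obstacle is the connectivity hypothesis on $G_{\del}$. First, we may assume $G_j$ is connected. The LV property is multiplicative over components: $R$ multiplies, $b$ adds, and the coefficient divisibility is preserved under products because $2^{b_1-i}\cdot 2^{b_2-(k-i)}=2^{b_1+b_2-k}$. So we could reduce to components at the outset; each component is a minor and hence \dwred. Now suppose $G$ is connected but $G_{\del}$ is disconnected. Then some component of $G_{\del}$ contains exactly one of $x,y,z$, say $x$, or $G_{\del}$ has an isolated-vertex component, which gives a degree-two vertex or a 2-separation at the triangle. In the first case the edges $f,g$ at $x$ form a 2-edge cut. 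I would handle this directly rather than through Theorem~\ref{thm:hardbit}: in $H$, the vertex $w$ together with the part at $x$ is attached by a cut, and one can redo the count of Proposition~\ref{prop:lotsofnullity} with $G_{\del}$ split into two components. Alternatively, the 2-edge cut lets us decompose $R(G)$ via the series/2-sum structure into pieces that are smaller minors. The simplest route I would try first is to choose the reduction so that this never happens: the condition in Proposition~\ref{prop:dsubk} forbids bridges and degree-two vertices, and I would try to show that a 2-edge cut through the triangle can be avoided by first reducing a 2-connected block. Failing that, I would redo the generating-function identities for the disconnected case, where the count of $G_{\del}$ components changes the nullity shifts by $1$, and check that the same five-part induction goes through.
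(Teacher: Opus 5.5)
There is a genuine gap, and it sits exactly where you flag it. Your treatment of (K1)--(K4) is fine, and so is your use of the outer induction for $G_{\del}$, $G_{\con}$, $G_e$, $G_f$, $G_g$. But Theorem~\ref{thm:hardbit} requires $G_{\del}$ to be connected, and your last paragraph only lists strategies you might try.

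With your set-up the hypothesis really does fail: you run a backward induction along the \emph{whole} $\mathcal K$-reduction and reduce only to connected graphs. Take a triangle $xyz$ and glue a copy of $K_4$ at each of $x$, $y$, $z$. This planar graph is connected, loopless, bridgeless and simple, with no vertex of degree two. So the restriction in Proposition~\ref{prop:dsubk} permits a $\Delta Y$ exchange on $xyz$, yet $G_{\del}$ has three components. The condition on (K5) alone therefore does not give connectivity, and redoing the generating-function identities for a disconnected $G_{\del}$ is work you have not done.

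The missing idea is to make two changes together.

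\textbf{Reduce to $2$-connected $G$, not merely connected $G$.} Each block has fewer edges and is a minor of $G$, hence lies in $\mathcal D$. Since $R$ is multiplicative and $b$ additive over blocks, your convolution argument for components works verbatim.

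\textbf{Do not run the induction along the entire reduction.} Only the initial run $G=G_0,\ldots,G_s=G'$ of $\Delta Y$/$Y\Delta$ exchanges matters; all these graphs have $|E(G)|$ edges. The next graph $G''$ has fewer edges and lies in $\mathcal D$, since it is a minor of $G'$. So the outer induction hypothesis applies to $G''$ directly.

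With these changes the argument closes. Each exchange in the run is applied to a graph without degree-two vertices. A $\Delta Y$ or $Y\Delta$ exchange applied to a $2$-connected graph with no degree-two vertex yields a $2$-connected graph, so every $G_j$ in the run is $2$-connected. Then $G_{\del}$ is connected in both directions:
\begin{itemize}
\item For a $\Delta Y$ exchange, a component of $G_{\del}$ containing only one of $x,y,z$ would make that vertex a cut vertex or a vertex of degree two.
\item For a $Y\Delta$ exchange, $G_{\del}$ is the wye-side graph minus $w$, which is connected by $2$-connectivity.
\end{itemize}
Moreover $G'$ is $2$-connected with at least two edges, so it has no bridges or loops. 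Hence the move $G'\to G''$ is (K3) or (K4), and Proposition~\ref{prop:K3orK4} gives $G'$ the LV property. Theorem~\ref{thm:hardbit} then carries it back along the run to $G$.

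Your instinct about ``first reducing a $2$-connected block'' points the right way. Without truncating the sequence at the first edge-decreasing move, however, you cannot guarantee $2$-connectivity of the later graphs in the reduction.
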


\begin{proof}
    The proof proceeds by induction on the number of edges of a \dwred graph. The result is clearly true for graphs with at most one edge. Let $G$ be a \dwred graph with $k>0$ edges.
    Adding or removing isolated vertices to $G$ does not change $R$ or $b$, so we may suppose that $G$ has no isolated vertices. Thus if $G$ is not $2$-connected, then each of its blocks has fewer edges than $G$ and the inductive hypothesis ensures that each block of $G$ has the LV property. As $R(G)$ is multiplicative over blocks and $b(G)$ is additive over blocks, 
    the same is true for $G$ itself. Thus we may assume that $G$ is $2$-connected.
    
{Consider a $\mathcal K$-reduction for $G$ in which the number of edges is weakly decreasing and move (K5) or its inverse is only applied when $G$ has no loops, bridges, parallel edges or vertices of degree two. Such a reduction exists by Proposition~\ref{prop:dsubk}. The reduction begins with a (possibly empty) sequence of \deltawye or \wyedelta exchanges to reach a graph $G'$ with $k$ edges, before applying one of moves (K1)--(K4) giving a graph $G''$ with fewer than $k$ edges.}

{The result of applying a \deltawye or \wyedelta exchange to a $2$-connected graph without vertices of degree two is a $2$-connected graph. 
Thus $G'$ is $2$-connected and has at least two edges. 
Therefore the move used to obtain $G''$ from $G'$ must be either (K3) or (K4).     
    It follows from the inductive hypothesis that $G''$ has the LV property and  Proposition~\ref{prop:K3orK4} implies that $G'$ also has the LV property.}
    
Consider graphs $H$ and $H'$.
If $H'$ is obtained from $H$ by a \deltawye exchange and $H$ is $2$-connected with no vertex of degree two, then, in the notation of Theorem~\ref{thm:hardbit}, $H_{\del}$ is connected. 
Similarly if $H$ is obtained from $H'$ by a \wyedelta exchange and $H'$ is $2$-connected, then $H_{\del}$ is connected. Thus we may apply Theorem~\ref{thm:hardbit} to each of the moves used to obtain $G'$ from $G$ to deduce that $G$ also has the LV property. The result follows by induction.
\end{proof}

Planar graphs are \dwred so the following corollary is immediate.

\begin{corollary}
Every planar graph has the LV property and hence satisfies Las Vergnas' conjecture (Conjecture~\ref{conj3}).
\end{corollary}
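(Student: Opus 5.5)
The statement is the corollary that every planar graph has the LV property and hence satisfies Conjecture~\ref{conj3}. I would derive it directly from results already established, in two steps.

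\textbf{Step 1: planar graphs have the LV property.} By Theorem~\ref{thm:planarisd}, every planar graph is \deltawye reducible. The main theorem just proved states that every \dwred graph has the LV property. Combining the two gives the first claim at once.

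\textbf{Step 2: the LV property implies the conjectured divisibility.} Let $G$ be planar and write $d=b(G)$. Its diagonal Tutte polynomial expands as $D(z)=\sum_k b_k(G)(z+1)^k$. Taylor's theorem identifies the coefficients with derivatives:
\[
D^{(k)}(-1)=k!\,b_k(G).
\]
Here $b_k(G)$ is an integer, since $T(G;z,z)$ has integer coefficients and the change of variable to $z+1$ preserves integrality. By Step 1 we have $2^{d-k}\divides b_k(G)$ for all $0\le k\le d$. Since $k!$ is an integer, it follows that $2^{d-k}\divides k!\,b_k(G)=D^{(k)}(-1)$. This is exactly the condition in Conjecture~\ref{conj3} for the cycle matroid of $G$, whose bicycle space has dimension $d$.

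\textbf{Technical points.} Neither step presents a real obstacle. The only care needed is to match the bicycle dimension of the graph with that of its cycle matroid, which holds by definition. One should also note that Conjecture~\ref{conj3} is stated for binary matroids, and graphic matroids are binary, so the conjecture applies to planar graphs. All the substantive work lies in the main theorem and Theorem~\ref{thm:hardbit}, which we may assume.
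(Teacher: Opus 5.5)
Your proposal is correct and follows the paper's own argument: Theorem~\ref{thm:planarisd} gives \deltawye-reducibility, the main theorem then gives the LV property, and $D^{(k)}(-1)=k!\,b_k$ shows that $2^{d-k}\mid b_k$ implies Las Vergnas' condition $2^{d-k}\mid D^{(k)}(-1)$. Your added remarks on the integrality of $b_k$ and on matching the graph's bicycle dimension with that of its cycle matroid are accurate and harmless.
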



\begin{remark}
    The reader may wonder why we have stated Theorem~\ref{thm:main} for \dwred graphs and not for knot graphs. The proof certainly holds for every minor-closed class $\mathcal K'$ of knot graphs having a $\mathcal K$-reduction in which the number of edges is weakly decreasing and every graph appearing in the sequence belongs to $\mathcal K'$. Unfortunately, we do not know of any such class properly including $\mathcal D$. 
\end{remark}

\begin{remark}
Conjecture~\ref{conj3} concerns binary matroids and the reader may wonder whether it is possible to extend the approach we have described from graphs to binary matroids.

As the Fano matroid has bicycle dimension $3$ and $28$ bases, any minor closed family of binary matroids with the LV-property must necessarily comprise only regular matroids.  

Nevertheless, it is worth considering whether the techniques we have 
used may be employed to show that the LV property holds for certain regular but not graphic matroids. 
Moves (D1)--(D4)
may be extended to all matroids, by merely amending (D4) to 
deal with a pair of series elements rather than two edges incident with a vertex of degree two.
For binary matroids, the \deltawye exchange becomes a triangle--triad exchange in which a three element circuit (triangle) is replaced by a three element cocircuit (triad). This move is only permitted when the triangle does not contain a cocircuit and its inverse is only permitted when the triad does not contain a circuit. Following Truemper~\cite{zbMATH00053949}, we then define a binary matroid to be \emph{\deltawye reducible} if it may be transformed to the empty matroid using these moves and their inverses. 

It is straightforward to check that the proof of Theorem~\ref{thm:main} extends to \deltawye reducible matroids, virtually unchanged. 
Clearly, the class of
\deltawye reducible matroids includes the cycle
matroids of all \deltawye reducible graphs. Because the class of \deltawye reducible matroids is closed under duality, it also includes the cocycle matroids of \deltawye reducible graphs. 
Thus the cycle and cocycle matroids of \deltawye reducible graphs satisfy the LV property. 
Interestingly, the regular matroid $R_{10}$ is not 
\deltawye reducible
but has the LV-property.
\end{remark}

\section{A non-extension to embedded graphs}
We close by considering whether 
the LV property may hold for graphs that are cellularly embedded in an orientable surface, or equivalently orientable ribbon graphs. There are several reasons for optimism. First, for an orientable ribbon graph $\mathbb G$ there are natural definitions for both $f_i(\mathbb G)$ and $b(\mathbb G)$ which agree with the definitions for graphs (that are not embedded) when $\mathbb G$ is a plane graph. Second, Berman's Theorem has been extended to orientable ribbon graphs~\cite{MMN}, that is $b(\mathbb G)$ divides $f_0(\mathbb G)$ for every orientable ribbon graph $\mathbb G$. Finally, the authors of~\cite{MMN} suggest that results which hold for planar graphs but not for all graphs might be instances of results for graphs embedded on higher genus surfaces, and present some evidence for this by defining the critical group of an orientable ribbon graph.
Unfortunately, we have found an example showing that such an extension is not possible.

As we are presenting a negative result, we keep our discussion brief and refer the reader to~\cite{Jo-Iain-book} for more information on ribbon graphs. To present the counterexample it suffices to consider orientable bouquets,
which are orientable ribbon graphs with just one vertex.
These may be described by a double-occurrence word, or equivalently a chord diagram. A \emph{double-occurrence word}
is a finite list of symbols in which each symbol appears twice, considered up to cyclic shifts. A \emph{bouquet} is a surface with boundary comprising a collection of closed discs, considered up to homeomorphism. One disc is the vertex and the remaining discs (or ribbons) are pairwise disjoint and are the edges. Each edge  
intersects the vertex in two disjoint arcs, called the \emph{ends} of the edge. A bouquet is \emph{orientable} if for each edge its union with the vertex forms an annulus. To translate between a bouquet and a double-occurrence word, note that a bouquet is determined by the cyclic order in which the ends of the edges appear on the boundary of the vertex. 

A bouquet is a \emph{quasi-tree} if its boundary has only one component. For example, a bouquet with no edges is a quasi-tree; an orientable bouquet with one edge is not a quasi-tree; and an orientable bouquet with two edges is a quasi-tree if and only if the ends of its edges are interlaced, that is, if its double-occurrence word has the form $abab$ but not if it has the form $aabb$. 
More generally, we say that edges $a$ and $b$ are \emph{interlaced} in a bouquet $\mathbb G$ if the ends of $a$ and $b$ are met alternately as the boundary of the vertex of $\mathbb G$ is traversed, or equivalently if $a$ and $b$ are interlaced in the double occurrence word of $\mathbb G$. Although we have not defined ribbon graphs with more than one vertex, the concept of a quasi-tree extends to all ribbon graphs and then a plane graph is a quasi-tree if and only if it is a tree. 

For a bouquet $\mathbb G$ and subset $A$ of its edges, we define the bouquet $\mathbb G|A$ to be the result of deleting the edges of $\mathbb G$ not in $A$. With a slight abuse of notation, a set $A$ of edges of a bouquet is said to be a \emph{quasi-tree} if $\mathbb G|A$ is a quasi-tree and we let $f_0(\mathbb G)$ denote the number of quasi-trees of $\mathbb G$. Again, the concept of a quasi-tree may be defined for ribbon graphs in general, and for plane graphs the number of quasi-trees is equal to the number of spanning trees. Let $\mathcal T(\mathbb G)$ denote the number of quasi-trees of a bouquet $\bG$. Then for a set $A$ of edges of $\bG$, we define its \emph{nullity} $n_{\bG}(A)$ by
\[ n_{\bG}(A) = \min_{T\in \mathcal T(\mathbb G)} |T\symdiff A|,\]
analogously to the earlier definition of nullity for graphs. 
We let $f_i(\mathbb G)$ denote the number of subsets of the edges of $\mathbb G$ having nullity $i$.

Given an orientable bouquet $\mathbb G$, we define its interlacement matrix $M(\mathbb G)$ to be the binary matrix with rows and columns indexed by the edges of $\mathbb G$ having zero diagonal, so that for distinct edges $a$ and $b$, $M(\mathbb G)_{a,b}=1$ if and only if $a$ and $b$ are interlaced.

Bouchet showed~\cite{BouchetPU} that a set $A$ of edges of a bouquet is a quasi-tree if and only if the corresponding principal sub-matrix of $M(\mathbb G)$ is non-singular, with all computations over $GF(2)$. (Here we adopt the convention that the empty matrix is non-singular.) Thus $f_0$, $f_1$, $\ldots$ are all determined by $M(\mathbb G)$, and it is not difficult to show that the nullity of a set of edges coincides with the nullity of the corresponding principal submatrix of $M(\mathbb G)$. Following~\cite{MMN}, we define the bicycle space of a bouquet $\bG$ to be the kernel (over $GF(2)$) of the matrix $I+M(\mathbb G)$ and let $b(\mathbb G)$ denote its dimension. We omit the details here, but it is shown in~\cite{MMN} that by exploiting partial duality, the definition of the bicycle space may be extended to all ribbon graphs and then its definition coincides with the usual one for plane graphs.

We are now ready to present our counterexample. Let $\bG$ be the bouquet having edge set $\{1,2,3,\ldots,10\}$ with double occurrence word
\[1,2,3,4,5,6,1,7,8,9,10,7,5,3,10,2,6,9,4,8.\]				We have
\[ M(\bG) =\begin{pmatrix} 
0&	1&	1&	1&	1&	1&	0&	0&	0&	0\\
1&	0&	0&	1&	0&	1&	0&	1&	1&	0\\
1&	0&	0&	1&	0&	1&	0&	1&	1&	1\\
1&	1&	1&	0&	0&	0&	0&	1&	0&	0\\
1&	0&	0&	0&	0&	1&	0&	1&	1&	1\\
1&	1&	1&	0&	1&	0&	0&	1&	1&	0\\
0&	0&	0&	0&	0&	0&	0&	1&	1&	1\\
0&	1&	1&	1&	1&	1&	1&	0&	0&	0\\
0&	1&	1&	0&	1&	1&	1&	0&	0&	0\\
0&	0&	1&	0&	1&	0&	1&	0&	0&	0
\end{pmatrix}.\]
It can be shown that $b(\bG)=4$, $f_0(\bG)=176$, $f_1(\bG)=396$, $f_2(\bG)=316$ and $f_3(\bG)=115$ which is not divisible by $16/2^3=2$.
The example was found by a computer search, which also reveals that there are no counterexamples with fewer edges.
\bibliographystyle{plain}
\bibliography{lv}

@article{knotgraphs,
 author = {Noble, S. D. and Welsh, D. J. A.},
 title = {Knot graphs},
 fjournal = {Journal of Graph Theory},
 journal = {J. Graph Theory},
 issn = {0364-9024},
 volume = {34},
 number = {1},
 pages = {100--111},
 year = {2000},
 language = {English},
 doi = {10.1002/(SICI)1097-0118(200005)34:1<100::AID-JGT9>3.0.CO;2-R},
 zbMATH = {1462935},
 Zbl = {0946.05045}
}

@article {MR2200535,
    AUTHOR = {Yu, Yaming},
     TITLE = {More forbidden minors for {W}ye-{D}elta-{W}ye reducibility},
   JOURNAL = {Electron. J. Combin.},
  FJOURNAL = {Electronic Journal of Combinatorics},
    VOLUME = {13},
      YEAR = {2006},
    NUMBER = {1},
     PAGES = {Research Paper 7, 15},
      ISSN = {1077-8926},
   MRCLASS = {05C75 (05C83)},
  MRNUMBER = {2200535},
MRREVIEWER = {Isidoro\ Gitler},
       DOI = {10.37236/1033},
       URL = {https://doi.org/10.37236/1033},
}

@article {MR1164063,
    AUTHOR = {Robertson, Neil and Seymour, P. D. and Thomas, Robin},
     TITLE = {Linkless embeddings of graphs in {$3$}-space},
   JOURNAL = {Bull. Amer. Math. Soc. (N.S.)},
  FJOURNAL = {American Mathematical Society. Bulletin. New Series},
    VOLUME = {28},
      YEAR = {1993},
    NUMBER = {1},
     PAGES = {84--89},
      ISSN = {0273-0979,1088-9485},
   MRCLASS = {57M15 (05C10)},
  MRNUMBER = {1164063},
MRREVIEWER = {G.\ Burde},
       DOI = {10.1090/S0273-0979-1993-00335-5},
       URL = {https://doi.org/10.1090/S0273-0979-1993-00335-5},
}

@article {MR4093686,
    AUTHOR = {Brijder, Robert and Hoogeboom, Hendrik Jan},
     TITLE = {Counterexamples to a conjecture of {L}as {V}ergnas},
   JOURNAL = {European J. Combin.},
  FJOURNAL = {European Journal of Combinatorics},
    VOLUME = {89},
      YEAR = {2020},
     PAGES = {103141, 3},
      ISSN = {0195-6698,1095-9971},
   MRCLASS = {05B35},
  MRNUMBER = {4093686},
MRREVIEWER = {Xiangqian\ Zhou},
       DOI = {10.1016/j.ejc.2020.103141},
       URL = {https://doi.org/10.1016/j.ejc.2020.103141},
}

@article {MMN,
    AUTHOR = {Merino, Criel and Moffatt, Iain and Noble, Steven},
     TITLE = {The critical group of a combinatorial map},
   JOURNAL = {Comb. Theory},
  FJOURNAL = {Combinatorial Theory},
    VOLUME = {5},
      YEAR = {2025},
    NUMBER = {3},
     PAGES = {Paper No. 2, 41},
      ISSN = {2766-1334},
   MRCLASS = {05C25 (05C10 05C50 05C57 20K01 91A43)},
  MRNUMBER = {4962116},
}

@book{zbMATH00053949,
 author = {Truemper, K.},
 title = {Matroid decomposition},
 isbn = {0-12-701225-7; 0-9663554-0-7},
 year = {1992},
 publisher = {Boston, MA: Academic Press, Inc.},
 language = {English},
 zbMATH = {53949},
 Zbl = {0760.05001}
}

@book{Jo-Iain-book,
	author = {Ellis-Monaghan, Joanna A. and Moffatt, Iain},
	doi = {10.1007/978-1-4614-6971-1},
	isbn = {978-1-4614-6970-4; 978-1-4614-6971-1},
	mrclass = {05-02 (05C10 05C31 57-01 57M25)},
	mrnumber = {3086663},
	mrreviewer = {Lorenzo Traldi},
	pages = {xii+139},
	publisher = {Springer, New York},
	series = {SpringerBriefs in Mathematics},
	title = {Graphs on surfaces: Dualities, polynomials, and knots},
	year = {2013}}

@article{BouchetPU,
	author = {Bouchet, Andr\'{e}},
	doi = {10.1016/0012-365X(87)90132-4},
	fjournal = {Discrete Mathematics},
	issn = {0012-365X},
	journal = {Discrete Math.},
	mrclass = {05C75},
	mrnumber = {900943},
	mrreviewer = {Hubert de Fraysseix},
	number = {1-2},
	pages = {203--208},
	title = {Unimodularity and circle graphs},
	url = {https://doi.org/10.1016/0012-365X(87)90132-4},
	volume = {66},
	year = {1987}}

@article{berman,
 author = {Berman, Kenneth A.},
 title = {Bicycles and spanning trees},
 fjournal = {SIAM Journal on Algebraic and Discrete Methods},
 journal = {SIAM J. Algebraic Discrete Methods},
 issn = {0196-5212},
 volume = {7},
 pages = {1--12},
 year = {1986},
 language = {English},
 doi = {10.1137/0607001},
 zbMATH = {3943839},
 Zbl = {0588.05016}
}

@article{Epifanov,
 author = {Epifanov, G. V.},
 title = {Reduction of a plane graph to an edge by a star-triangle transformation},
 fjournal = {Soviet Mathematics. Doklady},
 journal = {Sov. Math., Dokl.},
 issn = {0197-6788},
 volume = {7},
 pages = {13--17},
 year = {1966},
 language = {English},
 zbMATH = {3241109},
 Zbl = {0149.21301}
}

@article{Truemper,
 author = {Truemper, K.},
 title = {On the {Delta}-{Wye} reduction for planar graphs},
 fjournal = {Journal of Graph Theory},
 journal = {J. Graph Theory},
 issn = {0364-9024},
 volume = {13},
 number = {2},
 pages = {141--147},
 year = {1989},
 language = {English},
 doi = {10.1002/jgt.3190130202},
 zbMATH = {4108782},
 Zbl = {0677.05020}
}

@book{G+R,
 author = {Godsil, Chris and Royle, Gordon},
 title = {Algebraic {G}raph {T}heory},
 fseries = {Graduate Texts in Mathematics},
 series = {Grad. Texts Math.},
 issn = {0072-5285},
 volume = {207},
 isbn = {0-387-95241-1; 0-387-95220-9},
 year = {2001},
 publisher = {New York, NY: Springer},
 language = {English},
 zbMATH = {1600999},
 Zbl = {0968.05002}
}

@book{E+M,
 editor = {Ellis-Monaghan, Joanna A. and Moffatt, Iain},
 title = {Handbook of the {Tutte} polynomial and related topics},
 isbn = {978-1-4822-4062-7; 978-1-032-23193-8; 978-0-429-16161-2},
 year = {2022},
 publisher = {Boca Raton, FL: CRC Press},
 language = {English},
 doi = {10.1201/9780429161612},
 zbMATH = {7553843},
 Zbl = {1495.05001}
}

@misc{PrincTrip,
 author = {Rosenstiehl, P. and Read, R. C.},
 title = {On the principal edge tripartition of a graph},
 year = {1978},
 language = {English},
 howpublished = {Ann. {Discrete} {Math}. 3, 195-226},
 doi = {10.1016/S0167-5060(08)70508-9},
 zbMATH = {3608085},
 Zbl = {0392.05059}
}

@article{LV33,
 author = {Las Vergnas, Michel},
 title = {On the evaluation at (3,3) of the {Tutte} polynomial of a graph},
 fjournal = {Journal of Combinatorial Theory. Series B},
 journal = {J. Comb. Theory, Ser. B},
 issn = {0095-8956},
 volume = {45},
 number = {3},
 pages = {367--372},
 year = {1988},
 language = {English},
 doi = {10.1016/0095-8956(88)90079-2},
 zbMATH = {4103089},
 Zbl = {0674.05024}
}

\section*{Acknowledgement}

This work was started during the Newton Institute (University of Cambridge) Programme on ``Combinatorics and Statistical Mechanics'' in 2008 and was completed while both authors were visiting Emeric Gioan at LIRMM (University of Montpellier) in 2024. We thank both institutions for their hospitality and Emeric Gioan for useful comments. 
\end{document}